\newtheorem{theorem}{Theorem}[section]
\newtheorem{lemma}[theorem]{Lemma}
\newtheorem{corollary}[theorem]{Corollary}
\newtheorem{proposition}[theorem]{Proposition}
\theoremstyle{definition}
\newtheorem{assumption}[theorem]{Assumption}
\newtheorem{remark}[theorem]{Remark}
\numberwithin{equation}{section}
 \theoremstyle{plain}    
 \numberwithin{equation}{section} %% Comment out for sequentially-numbered
 \numberwithin{figure}{section} %% Comment out for sequentially-numbered
 \theoremstyle{plain}    
 \theoremstyle{plain}    
 \theoremstyle{remark}    
 \newtheorem*{acknowledgement*}{Acknowledgement} 
\newcommand{\cF}{{\mathcal F}}
\newcommand{\cG}{{\mathcal G}}
\newcommand{\cH}{{\mathcal H}}
\newcommand{\te}{{\theta}}
\newcommand{\Om}{{\Omega}}
\newcommand{\om}{{\omega}}
\newcommand{\ve}{{\varepsilon}}
\newcommand{\del}{{\delta}}
\newcommand{\gam}{{\gamma}}
\newcommand{\vp}{{\varpi}}
\newcommand{\io}{{\iota}}
\newcommand{\up}{{\upsilon}}
\newcommand{\Up}{{\Upsilon}}
\newcommand{\Sig}{{\Sigma}}
\newcommand{\sig}{{\sigma}}
\newcommand{\al}{{\alpha}}
\newcommand{\be}{{\beta}}
\newcommand{\ka}{{\kappa}}
\newcommand{\bbC}{{\mathbb C}}
\newcommand{\bbN}{{\mathbb N}}
\newcommand{\bbR}{{\mathbb R}}
\newcommand{\bbI}{{\mathbb I}}
\begin{document}
\title[]{Strong approximations for nonconventional sums\\ with applications
to law of iterated logarithm\\ and almost sure central limit theorem}%
 \vskip 0.1cm 
 \author{ Yuri Kifer\\
 \vskip 0.1cm
Institute of Mathematics\\
The Hebrew University of Jerusalem}% 
\email{kifer\@@math.huji.ac.il}
\address{Institute of Mathematics, Hebrew University, Jerusalem 91904,\linebreak
 Israel}

\thanks{ }
\subjclass[2000]{Primary: 60F15 Secondary: 60G15, 60G42, 37D20, 60F17}%
\keywords{strong approximations, almost sure central limit theorem,
 martingale approximation, mixing, dynamical systems.}%
\dedicatory{  }
 \date{\today}
\begin{abstract}\noindent
We improve, first, a strong invariance principle from \cite{Ki2} for 
nonconventional sums of the form 
$\sum_{n=1}^{[Nt]}F\big(X(n),X(2n),...,X(\ell n)\big)$
(normalized by $1/\sqrt N$) where $X(n),n\geq 0$'s is a sufficiently fast 
mixing vector process with some moment conditions and stationarity properties
and $F$ satisfies some regularity conditions. Applying this result we obtain 
next a version of the law of iterated logarithm for such sums, as well as an
almost sure central limit theorem. Among motivations for such results are their
applications to multiple recurrence for stochastic processes and dynamical 
systems.
\end{abstract}
%\footnotetext[1]{} 
\maketitle
\markboth{Yu.Kifer}{Strong approximations} 
\renewcommand{\theequation}{\arabic{section}.\arabic{equation}}
\pagenumbering{arabic}

\section{Introduction}\label{sec1}\setcounter{equation}{0}

In this paper we study almost sure limit theorems for nonconventional sums
 of the form
 \begin{equation}\label{1.1}
\Xi(t)=\sum_{1\leq n\leq t}\big(F\big(X(n),X(2n),..., X(\ell n)\big)
-\bar F\big)
\end{equation}
where $X(n),n\geq 0$'s is a sufficiently fast mixing 
vector valued process with some moment conditions and stationarity properties,
$F$ is a continuous function with polinomial growth and certain regularity
properties, 
$\bar F=\int Fd(\mu\times\cdots\times\mu)$ and $\mu$ is the distribution
of $X(0)$. The name "nonconventional" comes from \cite{Fu} where ergodic
theorems for averages $N^{-1}\Xi(N)$ were studied in the case when $X(n)=
X(n,x)=T^nx$ with $T$ being a measure preserving ergodic transformation.
We observe that the topic of nonconvenional ergodic theorems was extensively
studied during the last 30 years.

Recently the setup of nonconventional sums was studied from the probabilistic 
point of view and the strong law of large numbers, the functional central
limit theorem and a version of the strong invariance principle (called also
a strong approximation theorem) were obtained in \cite{Ki1}, \cite{KV} and
\cite{Ki2}, respectively. In this paper we partially improve the strong
invariance principle from \cite{Ki2} which enable us both to obtain a better
than in \cite{Ki2} version of the law of iterated logarithm and, moreover, to
derive an almost sure central limit theorem for sums $\Xi(t)$. 
We will show in this paper that the sum $\Xi(t)$ can be approximated as
$t\to\infty$ with an error term of order $t^{\frac 12-\gam},\,\gam>0$ by
certain Gaussian process $G(t)$ having, in general, dependent increments.
This will enable us to obtain the law of iterated logarithm and the
almost sure central limit by certain modifications of familiar proofs.
We observe that in \cite{Ki2} strong approximations were obtained only
for certain components of the sum $\Xi(t)$ which did not allow to 
derive the corresponding result for the whole sum. On the other
hand, terms $X(q(n))$ with nonlinear $q(n)$ were considered in \cite{Ki2}
while we do not deal with them here.

One of motivations for nonconventional limit theorems comes from multiple
recurrence problems. Let, for instance, $F(x_1,...,x_\ell)=x_1\cdots x_\ell$
and $X(n)=\bbI_A(\xi_n)$ where $\bbI_A$ is the indicator of a set $A$ and 
$\xi_n$ is either a dynamical system $\xi_n=\xi_n(\om)=T^n\om$ or a Markov
chain. Then $\Xi(N)=\sum_{n=1}^NF(X(n),...,X(\ell n))$ is the number of
events $\{\xi_n\in A,\,\xi_{2n}\in A,...,\xi_{\ell n}\in A\}$ for $n\leq N$, 
and so
our results describe limiting behavior of such quantities. This also can be
described as a number of arithmetic progressions of length $\ell$ starting
at 0 whose difference is between 1 and $N$ and such that at any positive 
time belonging to such progression the process $\xi$ is contained in $A$.

As in \cite{Ki1}, \cite{Ki2} and \cite{KV} our results hold true when, for
instance, $X(n)=T^nf$ where $f=(f_1,...,f_{\wp})$, $T$ is a mixing subshift of
finite type, a hyperbolic diffeomorphism (see \cite{Bo}) or an expanding
transformation taken with a Gibbs invariant measure, as well, as in the case
when $X(n)=f(\xi_k)$, $f=(f_1,...,f_\wp)$ where $\xi_n$ is a Markov chain
satisfying the Doeblin condition (see, for instance, \cite{IL}) considered
as a stationary process with 
respect to its invariant measure. Furthermore, our results are applicable 
to other dynamical systems such as the Gauss map of the interval (see, for
instance, \cite{IL} or \cite{Hei}) and a large class of transformations having
a spectral gap of their transfer operator which ensure their fast mixing 
properties. On the probabilistic side our results work for Markov processes
having transition operators with a spectral gap, in particular, for
Ornstein-Uhlenbeck type processes.

\section{Preliminaries and main results}\label{sec2}\setcounter{equation}{0}

As in \cite{Ki2} we deal with the setup which consists of a  $\wp$-dimensional 
stochastic process
$\{X(n),  n=0,1,...\}$ on a probability space $(\Om,\cF,P)$ and of a family
of $\sig$-algebras \hbox{$\cF_{kl}\subset\cF,\, -\infty\leq k\leq
l\leq\infty$} such that $\cF_{kl}\subset\cF_{k'l'}$ if $k'\leq k$ and
$l'\geq l$. The dependence between two sub $\sig$-algebras $\cG,\cH\subset\cF$
is measured often via the quantities
\begin{equation}\label{2.1}
\varpi_{q,p}(\cG,\cH)=\sup\{\| E\big [g|\cG\big]-E[g]\|_p:\, g\,\,\mbox{is}\,\,
\cH-\mbox{measurable and}\,\,\| g\|_q\leq 1\},
\end{equation}
where the supremum is taken over real functions and $\|\cdot\|_r$ is the
$L^r(\Om,\cF,P)$-norm. Then more familiar $\al,\rho,\phi$ and $\psi$-mixing 
(dependence) coefficients can be expressed in the form (see \cite{Bra}, Ch. 4 ),
\begin{eqnarray*}
&\al(\cG,\cH)=\frac 14\varpi_{\infty,1}(\cG,\cH),\,\,\rho(\cG,\cH)=\varpi_{2,2}
(\cG,\cH)\\
&\phi(\cG,\cH)=\frac 12\varpi_{\infty,\infty}(\cG,\cH)\,\,\mbox{and}\,\,
\psi(\cG,\cH)=\varpi_{1,\infty}(\cG,\cH).
\end{eqnarray*}
The relevant quantities in our setup are
\begin{equation}\label{2.2}
\varpi_{q,p}(n)=\sup_{k\geq 0}\varpi_{q,p}(\cF_{-\infty,k},\cF_{k+n,\infty})
\end{equation}
and accordingly
\[
\al(n)=\frac{1}{4}\varpi_{\infty,1}(n),\,\rho(n)=\varpi_{2,2}(n),\,
\phi(n)=\frac 12\varpi_{\infty,\infty}(n)\,\,\mbox{and}\,\,\ \psi(n)=
\varpi_{1,\infty}(n).
\]
Our assumptions will require certain speed of decay as $n\to\infty$ of both
the mixing rates $\varpi_{q,p}(n)$ and the approximation rates defined by
\begin{equation}\label{2.3}
\beta_p (n)=\sup_{m\geq 0}\|X(m)-E\big (X(m)|\cF_{m-n,m+n}\big)\|_p.
\end{equation}
In what follows we can always extend the definitions of $\cF_{kl}$
given only for $k,l\geq 0$ to negative $k$ by defining  $\cF_{kl}=\cF_{0l}$ 
for $k<0$ and $l\geq 0$. Furthermore, we do not require stationarity of the
process $X(n), n\geq 0$ assuming only that the distribution of $X(n)$ does not
depend on $n$ and the joint distribution of $\{X(n), X(n')\}$  depends only on
$n-n'$ which we write for further references by 
\begin{equation}\label{2.4}
X(n)\stackrel {d}{\sim}\mu\,\,\mbox{and}\,\,
(X(n),X(n'))\stackrel {d}{\sim}\mu_{n-n'}\,\,\mbox{for all}\,\,n,n'
\end{equation}
where $Y\stackrel {d}{\sim}Z$ means that $Y$ and $Z$ have the same distribution.

Next, let $F= F(x_1,...,x_\ell),\, x_j\in\bbR^{\wp}$ be a function on 
$\bbR^{\wp\ell}$ such that for some $\iota,K>0,\ka\in (0,1]$ and all 
$x_i,y_i\in\bbR^{\wp}, i=1,...,\ell$, 
\begin{equation}\label{2.5}
|F(x_1,...,x_\ell)-F(y_1,...,y_\ell)|\leq K\big(1+\sum^\ell_{j=1}|x_j|^\iota+
\sum^\ell_{j=1} |y_j|^\iota \big)\sum^\ell_{j=1}|x_j-y_j|^\ka
\end{equation}
and 
\begin{equation}\label{2.6} 
|F(x_1,...,x_\ell)|\leq K\big( 1+\sum^\ell_{j=1}|x_j|^{\iota} \big).
\end{equation}
The above assumptions allow us to consider, for instance, functions 
$F$ polynomially dependent on their arguments, in particular, the product
function $F(x_1,...,x_\ell)=x_1,...,x_\ell$.
To simplify formulas we assume a centering condition
\begin{equation}\label{2.7}
\bar F=\int F(x_1,...,x_\ell)\,d\mu(x_1)\cdots d\mu(x_\ell)=0
\end{equation}
which is not really a restriction since we always can replace $F$ by 
$F-\bar F$. 

For each $\theta>0$ set
\begin{equation*}
\gamma_\theta^\theta = \|X\|_\theta^\theta= E|X(n)|^\theta  =
\int \|x\|^\theta d\mu .
\end{equation*}
Our main result relies on

\begin{assumption}\label{ass2.1} With $d=(\ell-1)\wp$ there exist $p,q\ge 1$
 and $\delta,m >0$ with   $ \delta<\ka-\frac dp$ satisfying 
\begin{equation*}
\sum_{n=0}^\infty n\varpi_{q,p}(n)<\infty,
\end{equation*}
\begin{equation*}
\sum_{r=0}^\infty r^{\frac {16}\del}\beta^\delta_q(r)<\infty,
\end{equation*}
\begin{equation*}
\gamma_{m}<\infty\,\,,\gam_{2q(\io+2)}<\infty\,\,\mbox{with}\,\,
 \frac{1}{2+\del}\ge \frac{1}{p}+\frac{\iota+2}{m}+\frac{\delta}{q}.
 \end{equation*}
 \end{assumption}
 
 A reader willing to avoid some visual technicalities may be advised to
 consider bounded Lipschitz continuous functions $F$ and exponentially fast
 decaying $\varpi_{q,p}(n)$ $\be_q(n)$ as $n\to\infty$ but since we mostly
 rely in this paper on estimates from \cite{KV} and \cite{Ki2} this would
 not matter much here.
 
 As in \cite{KV} a crucial part of our approach is the representation
 of $F=F(x_1,...,x_\ell)$ in the form
\begin{equation}\label{2.8}
F=F_1(x_1)+\cdots+F_\ell(x_1, x_2,\ldots, x_\ell)
\end{equation}
where for $i<\ell$,
\begin{eqnarray*}
&F_i(x_1,\ldots, x_i)=\int F(x_1,x_2, \ldots, x_\ell)\ d\mu (x_{i+1})\cdots 
d\mu(x_\ell)\\
&\quad -\int F(x_1,x_2, \ldots, x_\ell) \,d\mu (x_i)\cdots d\mu(x_\ell)\nonumber
\end{eqnarray*}
and
\[
F_\ell(x_1,x_2, \ldots, x_\ell)=F(x_1,x_2, \ldots, x_\ell) -\int F(x_1,x_2, 
\ldots, x_\ell)\, d\mu(x_\ell)
\]
which ensures, in particular, that
\begin{equation}\label{2.9}
\int F_i(x_1, x_2,\ldots,x_{i-1}, x_i)\,d\mu(x_i)\equiv 0 \quad\forall 
\quad x_1, x_2,\ldots, x_{i-1}.
\end{equation}
These enable us to write
\begin{equation}\label{2.10}
\Xi(t)=\sum_{i=1}^\ell \Psi_i(it)
\end{equation}
where for $1\leq i\leq\ell $,
\begin{equation}\label{2.11}
\Psi_i(t)=\sum_{1\leq n\leq t/i} F_i(X(n), X(2n),\ldots, X(in)).
\end{equation}

As in \cite{Ph} we will write $Z(t)\ll a(t)$ a.s. for a family of random
variables $Z(t),t\geq 0$ and a positive function $a(t),t\geq 0$ if
$\lim\sup_{t\to\infty}|Z(t)/a(t)|<\infty$ almost surely (a.s.)

\begin{theorem}\label{thm2.2}
Suppose that Assumption \ref{ass2.1} holds true. Then without changing its 
distribution the process $\Xi(t),\, t\geq 0$ given by (\ref{1.1}) can be 
redefined on a richer probability space where there exists an 
$\ell$-dimensional Gaussian process $G(t)=(G_1(t),...,G_\ell(t))$ with 
stationary independent increments having covariances $EG_i(s)G_j(t)=
D_{ij}s\wedge t$, for some nonnegatively definite matrix $D=(D_{ij},\,
1\leq i,j\leq\ell)$ and such that for some constant $\gam>0$,
\begin{equation}\label{2.12}
\Xi(t)-Q(t)\ll t^{\frac 12-\gam}\quad\mbox{a.s.}
\end{equation}
where $Q(t)=\sum_{j=1}^\ell G_j(jt)$ is a Gaussian process having, in general,
dependent increments (see \cite{KV}).
\end{theorem}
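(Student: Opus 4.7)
The plan is to extend the scalar strong approximation of \cite{Ki2} to a joint $\ell$-dimensional strong approximation for the vector $(\Psi_1(t),\ldots,\Psi_\ell(t))$ given in (2.11), and then combine the components via the representation (2.10). By that representation it suffices to construct, on a richer probability space, an $\ell$-dimensional Brownian motion $G(t)=(G_1(t),\ldots,G_\ell(t))$ with covariance $EG_i(s)G_j(t)=D_{ij}(s\wedge t)$ such that $\max_{1\le i\le\ell}|\Psi_i(t)-G_i(t)|\ll t^{1/2-\gam}$ a.s. Indeed, setting $Q(t)=\sum_i G_i(it)$, the triangle inequality applied to $\Xi(t)-Q(t)=\sum_i[\Psi_i(it)-G_i(it)]$ then yields (2.12) with the same $\gam$.

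To establish the joint strong approximation I would work with the $\ell$-dimensional vector summands
\[
\vec V_n=\bigl(F_1(X(n)),\,F_2(X(n),X(2n)),\,\ldots,\,F_\ell(X(n),\ldots,X(\ell n))\bigr).
\]
The centering identity (2.9), combined with the mixing bounds $\varpi_{q,p}(n)$ and the approximation rates $\beta_q(n)$ from Assumption \ref{ass2.1}, allows each component of $\sum_{n\le s}\vec V_n$ to be written as a (near) martingale difference sum, in the spirit of \cite{KV} and \cite{Ki2}. Using a big-block/small-block decomposition with block lengths $s^\al$ and gap lengths $s^\be$ for suitable $0<\be<\al<1$, the summability $\sum n\varpi_{q,p}(n)<\infty$ implies that the vector block sums are nearly independent, while the moment bounds in Assumption \ref{ass2.1} control the small-block contribution at the polynomial rate $s^{1/2-\gam}$. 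The matrix $D=(D_{ij})$ then arises as the joint limiting covariance per unit length and coincides with the covariance identified in the functional CLT of \cite{KV}.

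Given nearly-independent block sums whose distributions are close to multivariate Gaussians with covariance proportional to the block length times $D$, I would apply a vector-valued strong approximation of Berkes--Philipp or Zaitsev type on an enriched probability space to embed the vector partial sums in a Brownian motion $G(t)$ with polynomial error $s^{1/2-\gam}$. Tracking the exponents relies on the specific constraints of Assumption \ref{ass2.1} (in particular the interplay of $p,q,m,\del,\ka$). The main obstacle, relative to \cite{Ki2}, is precisely this joint embedding: there each $\Psi_i$ was approximated on its own enriched space, so the off-diagonal covariances $D_{ij}$ for $i\ne j$ were inaccessible and the process $Q(t)=\sum_j G_j(jt)$ could not be reconstructed. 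Overcoming this requires replacing the scalar Skorokhod--Strassen embedding used in \cite{Ki2} by a multivariate one that simultaneously reproduces the full cross-covariance structure of $\vec V_n$ on every block.
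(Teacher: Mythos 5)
Your overall architecture matches the paper's: reduce (\ref{2.12}) to a joint strong approximation of the vector $\Psi(t)=(\Psi_1(t),\ldots,\Psi_\ell(t))$ by an $\ell$-dimensional Brownian motion and then combine the components through (\ref{2.10}); the paper does exactly this in Proposition \ref{prop3.3}, and your observation that the real difficulty is replacing the scalar Skorokhod embedding of \cite{Ki2} by a genuinely multivariate one is the right diagnosis. However, the mechanism you propose for that multivariate step contains a genuine gap. You assert that, after a big-block/small-block decomposition, the summability of $\sum_n n\varpi_{q,p}(n)$ makes the vector block sums nearly independent, so that a Berkes--Philipp or Zaitsev type theorem for (nearly) independent vectors applies. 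For nonconventional sums this is false. The summand of $\Psi_i$ indexed by $n$ depends on $X$ at the times $n,2n,\ldots,in$, so a block whose \emph{top} arguments $in$ lie in $(a(j),b(j)]$ still depends on the process near the much earlier times $n\approx a(j)/i,\,2n,\ldots$. Concretely, for $i=2$ a block located around time $2T$ contains summands $F_2(X(n),X(2n))$ with $n\approx T$, which share the underlying randomness with a block located around time $T$ (whose summands have $2n\approx T$). These resonances $in-jn'=\up u$ are exactly what produce the nonzero off-diagonal entries of $D$ in (\ref{3.17}) and the dependent increments of $Q$; they also mean that the $\sigma$-algebras generated by distinct blocks do \emph{not} become asymptotically independent no matter how the gaps are chosen, so the Berkes--Philipp/Zaitsev machinery has no hypothesis to latch onto.

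The paper circumvents this by exploiting orthogonality rather than independence: using the centering identity (\ref{2.9}), the block sums are corrected into a \emph{vector martingale difference} sequence $(M(m),\cG_m)$ with $\cG_m=\cF_{-\infty,b(m)+r(m)}$, and then Theorem \ref{thm3.2} (Zhang's strong approximation theorem for martingale vectors, \cite{Zh}) is applied. The price is that one must verify the convergence rate (\ref{3.4}) of the conditional covariance matrices and the Lindeberg-type condition (\ref{3.5}); the former is the substantive part and is carried out by the induction (\ref{3.19})--(\ref{3.25}), which uses Proposition \ref{prop3.1} to peel off arguments one at a time and identifies $D_{ij}$ through the count of solutions of $in-jn'=\up u$. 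Your proposal names the right target covariance $D$ but gives no route to control the \emph{conditional} covariances or to handle the cross-block dependence, so as written the key embedding step would fail. If you want to salvage your plan, the fix is precisely to replace "nearly independent blocks $+$ Berkes--Philipp" by "martingale differences $+$ a strong approximation theorem for martingale vectors" and then supply the covariance-rate estimate (\ref{3.16}).
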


As in \cite{Ki2} we will rely on the representation (\ref{2.10})
 but in \cite{Ki2} we were able to obtain strong
invariance principles only for each $\Psi_i$ separately while Theorem 
\ref{thm2.2} provides a strong invariance principle for the original 
process $\Xi$. The proof of Theorem \ref{thm2.2} will rely on strong
approximation of the vector process $\Psi(t)=(\Psi_1(t),...,\Psi_\ell(t))$
by the vector Gaussian process $G(t)$ which unlike \cite{Ki2} cannot be
done via the Skorokhod embedding and we will have to verify conditions of
another approach from \cite{Zh} (see also \cite{Ph} and references there).

Let
\begin{equation}\label{2.13}
R(s,t)=EQ(s)Q(t)=\sum_{1\leq i,j\leq\ell}D_{ij}((is)\wedge (jt))
\end{equation}
be the covariance function of the Gaussian process $Q(t),\, t\geq 0$. Observe
 that $R(rs,rt)=rR(s,t)$ for any $r>0$. This together with (\ref{2.12}) and
 (\ref{2.13}) enable us to rely on the law of iterated logarithm for Gaussian
 processes from \cite{Oo} which is formulated in terms of the reproducing
 kernel Hilbert space $H$ corresponding to the kernel $R(s,t)$. Namely, we
 obtain
 \begin{corollary}\label{cor2.3} Let $K=\{ h\in H:\,\| h\|_HR(1,1)\leq 1\}$
 where $\|\cdot\|_H$ denotes the norm in $H$. Then the sequence of random
 functions 
 \begin{equation}\label{2.14}
 f_n(t)=\Xi(nt)(2R(n,n)\ln\ln n)^{-1/2},\, n\geq 3
 \end{equation}
 is a.s. equicontinuous and its set of limit points is a.s. contained in $K$.
 \end{corollary}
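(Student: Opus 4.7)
The plan is to transfer Oodaira's law of iterated logarithm for the Gaussian process $Q$ to $\Xi$ via the strong approximation of Theorem \ref{thm2.2}. Consider the auxiliary sequence of random functions
\begin{equation*}
g_n(t)=Q(nt)(2R(n,n)\ln\ln n)^{-1/2},\qquad n\ge 3.
\end{equation*}
The covariance $R$ of $Q$ satisfies the self-similarity $R(rs,rt)=rR(s,t)$ for all $r>0$; in particular $R(n,n)=nR(1,1)$. This is exactly the scaling hypothesis under which the general Gaussian LIL of \cite{Oo} applies, so the family $\{g_n\}_{n\ge 3}$ is almost surely equicontinuous on $[0,1]$ with set of limit points almost surely contained in the unit ball $K$ of the reproducing kernel Hilbert space associated with $R$.

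Next I would show that $f_n-g_n\to 0$ uniformly on $[0,1]$ almost surely, after passing to the richer probability space provided by Theorem \ref{thm2.2}. This passage does not affect the conclusion, since both equicontinuity of $\{f_n\}$ and the location of its limit set are properties of the joint distribution of $\{\Xi(t)\}_{t\ge 0}$. By (\ref{2.12}) there is a (random) constant $C<\infty$ and a $t_0\ge 1$ with $|\Xi(s)-Q(s)|\le Cs^{1/2-\gam}$ for all $s\ge t_0$. For $t\in[t_0/n,1]$ this gives
\begin{equation*}
|f_n(t)-g_n(t)|\le\frac{C(nt)^{1/2-\gam}}{(2nR(1,1)\ln\ln n)^{1/2}}\le\frac{C\,n^{-\gam}}{(2R(1,1)\ln\ln n)^{1/2}},
\end{equation*}
while for $t\in[0,t_0/n]$ the sum $\Xi(nt)$ contains at most $t_0$ terms and $Q(nt)$ is controlled by the a.s.\ continuity of the Gaussian process, so $|\Xi(nt)-Q(nt)|$ is bounded by a random constant depending only on $\omega$, which divided by $(nR(1,1)\ln\ln n)^{1/2}$ still tends to $0$. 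Hence $\sup_{t\in[0,1]}|f_n(t)-g_n(t)|\to 0$ a.s.

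Finally, uniform convergence of $f_n-g_n$ to $0$ combined with the a.s.\ equicontinuity of $\{g_n\}$ immediately yields equicontinuity of $\{f_n\}$, and the two families have the same set of a.s.\ limit points in the sup-norm topology on $C[0,1]$. Since the latter is contained in $K$ by Oodaira's theorem, the same holds for $\{f_n\}$, which is the desired conclusion.

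The only genuine difficulty is the appeal to \cite{Oo}: one must verify that the covariance $R(s,t)=\sum_{i,j}D_{ij}(is\wedge jt)$, a finite mixture of rescaled Brownian covariances, fulfils the hypotheses of the Gaussian LIL there. The self-similarity $R(rs,rt)=rR(s,t)$ and the stationarity of the independent increments of the coordinate processes $G_j$ provide essentially all that is needed, so this step reduces to a verification rather than a new estimate. Everything else is a routine perturbation argument built on top of Theorem \ref{thm2.2}.
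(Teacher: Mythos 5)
Your proposal is correct and follows essentially the same route as the paper, which derives the corollary by combining the scaling property $R(rs,rt)=rR(s,t)$ with Oodaira's Gaussian law of the iterated logarithm for $Q$ and then transferring to $\Xi$ via the strong approximation (\ref{2.12}); your argument merely spells out the routine perturbation step that the paper leaves implicit. No gaps of substance.
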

 
 In order to conclude that the set of limit points of the sequence $f_n$
 coincides with $K$ we have to impose at least some nondegeneracy conditions
 on the pair of $F$ and the process $X(n),\, n\geq 0$ but this question is not
 quite clear yet. It seems natural to expect that for generic (in some sense)
 pairs of $F$ and $X(n),\, n\geq 0$ the set of limit points of the sequence
 $f_n$ will coincide with $K$.
 
 Next, we describe our results concerning an a.s. central limit theorem. For
 each $t\in [0,1]$ and $n\in\bbN$ set
 \begin{equation}\label{2.15}
 Q_n(t)=n^{-1/2}\Xi([nt])(1+[nt]-nt)+n^{-1/2}\Xi([nt]+1)(nt-[nt])
 \end{equation}
 which produces a random element of the space $\bbC[0,1]$ of continuous
  functions on $[0,1]$ considered with the supremum norm topology. 
  Denote also by $\eta_Q$ the probability distribution
  of the process $Q(t),\, t\in[0,1]$ on $\bbC[0,1]$ while, as usual, by 
  $\del_x$ with $x\in\bbC[0,1]$ we denote the unit mass at $x$.
  
  \begin{theorem}\label{thm2.4}
  Under Assumption 2.1 and notations above
  \begin{equation}\label{2.16}
  \lim_{n\to\infty}\frac 1{\ln n}\sum_{k=1}^nk^{-1}\del_{Q_k}=\eta_Q\quad
  \mbox{a.s.}
  \end{equation}
  where the limit is taken in the sense of weak convergence of measures
  on $\bbC[0,1]$.
  \end{theorem}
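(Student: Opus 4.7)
The plan is to reduce (\ref{2.16}) to its Gaussian analogue via Theorem~\ref{thm2.2} and then establish the almost sure convergence by a Lacey--Philipp/Schatte-type argument adapted to the Gaussian family $\tilde Q_n(t):=n^{-1/2}Q(nt)$. The scaling $R(rs,rt)=rR(s,t)$ implies $\tilde Q_n\stackrel{d}{\sim}Q|_{[0,1]}$ for every $n$, and Theorem~\ref{thm2.2} gives $\sup_{t\in[0,1]}|Q_n(t)-\tilde Q_n(t)|\ll n^{-\gam}$ a.s.\ (the error $t^{1/2-\gam}$ divided by $\sqrt n$, plus a piecewise-linearization error of lower order). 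Hence for any bounded Lipschitz $f$ on $\bbC[0,1]$, $|f(Q_k)-f(\tilde Q_k)|\ll k^{-\gam}$ a.s., so
\[
\Bigl|\frac{1}{\ln n}\sum_{k\le n}k^{-1}\bigl(f(Q_k)-f(\tilde Q_k)\bigr)\Bigr|\ll\frac{1}{\ln n}\sum_{k\le n}k^{-1-\gam}=O\Bigl(\frac{1}{\ln n}\Bigr)\to 0.
\]
Since $Ef(\tilde Q_k)=\int f\,d\eta_Q$, the theorem reduces to showing
\[
U_n(f):=\frac{1}{\ln n}\sum_{k=1}^n k^{-1}\bigl(f(\tilde Q_k)-\textstyle\int f\,d\eta_Q\bigr)\to 0\quad\mbox{a.s.}
\]
for every $f$ in a countable determining family of bounded Lipschitz test functions.

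The key estimate is $|\operatorname{Cov}(f(\tilde Q_j),f(\tilde Q_k))|\ll(j/k)^{1/2}$ for $j\le k$. Writing $Q(t)=\sum_{p=1}^\ell G_p(pt)$ with independent-increment components (Theorem~\ref{thm2.2}), the independence of $\{G_p(u)-G_p(pj):u\ge pj\}$ from $\sigma(G_p|_{[0,pj]}:p)\supseteq\sigma(\tilde Q_j)$ yields, after a short calculation,
\[
E[\tilde Q_k(t)\mid\sigma(\tilde Q_j)]=(P_{j,k}\tilde Q_j)(t),\quad(P_{j,k}\vf)(t):=\sqrt{j/k}\,\vf(\min(kt/j,1)),
\]
where $P_{j,k}:\bbC[0,1]\to\bbC[0,1]$ is a deterministic linear map of sup-norm operator norm $\sqrt{j/k}$; the residual $Z_{j,k}:=\tilde Q_k-P_{j,k}\tilde Q_j$ is Gaussian and independent of $\sigma(\tilde Q_j)$. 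Setting $h(x):=Ef(P_{j,k}x+Z_{j,k})$ gives $\operatorname{Cov}(f(\tilde Q_j),f(\tilde Q_k))=\operatorname{Cov}(f(\tilde Q_j),h(\tilde Q_j))$, with $h$ Lipschitz of constant $\le\|f\|_{\mathrm{Lip}}\sqrt{j/k}$. The Borell--Sudakov--Tsirelson Gaussian concentration inequality applied to $h(\tilde Q_j)$ then yields $\operatorname{Var}(h(\tilde Q_j))\ll j/k$, and Cauchy--Schwarz closes the estimate.

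With $\xi_k:=k^{-1}(f(\tilde Q_k)-\int f\,d\eta_Q)$, the covariance bound implies $\operatorname{Var}(\sum_{k\le n}\xi_k)\ll\sum_k k^{-2}+\sum_{j<k\le n}(jk)^{-1}(j/k)^{1/2}\ll\ln n$. Along the rapidly growing subsequence $n_j:=\lfloor\exp(j^{1+\del})\rfloor$ (any $\del>0$), Chebyshev's inequality gives $P(|U_{n_j}(f)|>\ve)\ll j^{-(1+\del)}$, which is summable, so Borel--Cantelli yields $U_{n_j}(f)\to 0$ a.s. For $n\in[n_j,n_{j+1}]$, the elementary bound $|U_n(f)-(\ln n_j/\ln n)\,U_{n_j}(f)|\le 2\|f\|_\infty\ln(n/n_j)/\ln n=O(1/j)$ extends the convergence to the full sequence; intersecting over a countable determining family of $f$'s and applying the Portmanteau theorem then gives (\ref{2.16}). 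The main non-routine ingredient is the covariance bound of the second paragraph: passing from the kernel-level cross-correlation $E\tilde Q_j(s)\tilde Q_k(t)\ll\sqrt{j/k}$ (trivial from $R(js,kt)\le Cj$) to genuine covariance decay for nonlinear Lipschitz functionals of the two correlated Gaussian processes is where the Gaussian concentration step is essential, while the variance summation, the Borel--Cantelli subsequence step, and the logarithmic interpolation are standard.
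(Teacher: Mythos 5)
Your overall strategy --- reduce to the rescaled Gaussian process $\tilde Q_n=n^{-1/2}Q(n\,\cdot)$ via Theorem \ref{thm2.2} (plus control of the interpolation errors), then prove the a.s.\ convergence for the Gaussian family by a covariance-decay/second-moment/Borel--Cantelli argument with logarithmic interpolation --- is a legitimate route; it is essentially the Lacey--Philipp method that Remark \ref{rem4.3} notes can be adapted here. The paper itself argues differently: following Brosamler it proves the ASCLT for $Q$ (Proposition \ref{prop4.1}) by applying the Birkhoff ergodic theorem to the log-stationary process $\hat\Phi_s=\Phi_{\ln s}$ (stationarity coming from the scaling $R(rs,rt)=rR(s,t)$) together with triviality of the tail $\sigma$-algebra of $G$, and then transfers the result to $\Xi$ via Lemma \ref{lem4.2} and the strong approximation. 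Your reduction step and the variance summation, subsequence and interpolation bookkeeping are standard and correct.

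However, the key covariance estimate in your second paragraph rests on a false claim. You assert that $\{G_p(u)-G_p(pj):u\ge pj\}_p$ is independent of $\sig(G_p|_{[0,pj]}:p)$ and deduce $E[\tilde Q_k(t)\mid\sig(\tilde Q_j)]=\sqrt{j/k}\,\tilde Q_j(\min(kt/j,1))$. But the components $G_1,\dots,G_\ell$ are \emph{not} mutually independent: only the vector process $G$ has independent increments, with $EG_p(s)G_q(t)=D_{pq}\,s\wedge t$ and $D_{pq}\neq 0$ off the diagonal in general (this is precisely why $Q$ has dependent increments). Since the cut-off times $pj$ differ across components, e.g.\ $G_1(u)-G_1(j)$ for $u\ge 2j$ involves the vector increment over $[j,2j]$, whose second component determines $G_2(2j)-G_2(j)\in\sig(G_2|_{[0,2j]})$; concretely $E\big[(G_1(2j)-G_1(j))(G_2(2j)-G_2(j))\big]=D_{12}\,j\neq 0$, so the claimed independence and hence the projection formula fail. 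The bound $|\operatorname{Cov}(f(\tilde Q_j),f(\tilde Q_k))|\ll\sqrt{j/k}$ that you actually need is nonetheless true and is recovered by cutting all components at the \emph{common} time $\ell j$: write $\tilde Q_k(t)=A_{j,k}(t)+Z_{j,k}(t)$ with $A_{j,k}(t)=k^{-1/2}\sum_p G_p(pkt\wedge \ell j)$ and $Z_{j,k}(t)=k^{-1/2}\sum_p\big(G_p(pkt)-G_p(pkt\wedge\ell j)\big)$. Then $Z_{j,k}$ is genuinely independent of $\sig(G|_{[0,\ell j]})\supseteq\sig(\tilde Q_j)$, while $E\sup_t|A_{j,k}(t)|\ll\sqrt{j/k}$, so $|\operatorname{Cov}(f(\tilde Q_j),f(\tilde Q_k))|=|\operatorname{Cov}(f(\tilde Q_j),f(A_{j,k}+Z_{j,k})-f(Z_{j,k}))|\le 2\|f\|_\infty\|f\|_{\mathrm{Lip}}\,E\sup_t|A_{j,k}(t)|\ll\sqrt{j/k}$; this also makes the Gaussian concentration step unnecessary. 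With that repair your argument goes through.
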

  
  This result can be rephrased saying that outside of a single probability
  zero set
  \begin{equation}\label{2.17}
  \lim_{n\to\infty}\frac 1{\ln n}\sum_{k=1}^nk^{-1}\del_{\phi(Q_k)}=
  \phi(\eta_Q)\quad\mbox{a.s.}
  \end{equation}
  for all measurable functions $\phi$ on $\bbC[0,1]$ which are continuous 
  $\eta_Q-$a.s., where $\phi(\eta_Q)$ is the image measure of $\eta_Q$ under
  $\phi$. Letting, in particular, $\phi_1(x)=x(1)$ we obtain that a.s. as
  $n\to\infty$,
  \begin{equation}\label{2.18}
  \lim_{n\to\infty}\frac 1{\ln n}\sum_{k=1}^nk^{-1}\del_{\Xi(k)k^{-1/2}}=
  \mbox{distribution of}\,\, Q(1).
  \end{equation}
  
  Our proof of Theorem \ref{thm2.4} will follow the scheme of \cite{Bro}
  (see also \cite{LP}) 
  deriving first an almost sure central limit theorem for the Gaussian
  process $Q$ from Theorem \ref{thm2.2} and then relying on the strong
  approximation result (\ref{2.12}) in place of the Skorokhod embedding
  (representation) employed in \cite{Bro} which does not work here.
  
  \begin{remark}\label{rem2.5}
  An analogy of Corollary 1.5 from \cite{Bro} concerning the arcsine
  distribution can also be obtained in our circumstances. Namely, let
  \[
  L_n=n^{-1}\#\{ k:\,\Xi(k)>0,\, 1\leq k\leq n\}
  \]
  and define $\phi:\,\bbC[0,1]\to\bbR$ by $\phi(x)=$Lebesgue measure of
  $\{ u\in[0,1]:\, x(u)>0\}$. It is easy to see that $\phi$ is 
  $\eta_Q$-a.s. continuous and it follows from Theorem \ref{thm2.4} that
  with probability one
  \begin{equation}\label{2.19}
  \lim_{n\to\infty}\frac 1{\ln n}\sum_{k=1}^nk^{-1}\del_{\phi(Q_k)}
  =\phi(\eta_Q)
  \end{equation}
  where the right hand side of (\ref{2.19}) can be interpreted as an analogy
  of the arcsine law corresponding to the Gaussian process $Q$ in the same 
  sense as the usual arcsine law corresponds to the standard Brownian motion.
  Now, with probability one
  \[
  \lim_{n\to\infty}|\phi(Q_n)-L_n|=0
  \]
  and employing an analogy of Lemma 2.12 from \cite{Bro} (see Lemma 
  \ref{lem4.2} below) we obtain that with probability one
   \begin{equation}\label{2.20}
  \lim_{n\to\infty}\frac 1{\ln n}\sum_{k=1}^nk^{-1}\del_{L_k}=\phi(\eta_Q).
  \end{equation}
  \end{remark}
  
  \begin{remark}\label{rem2.6}
  In addition to almost sure central limit theorem large deviations from the
   limit were studied, as well (see \cite{MS} and \cite{Hec}). These results
   were also based on strong approximations but it was crucial there to rely
   for that on the Skorokhod embedding into the Brownian motion where rather
   specific estimates are available. Namely, for these large deviations
   estimates the result of Theorem \ref{thm2.2} in the form (\ref{2.12})
   does not suffice since now we have to show that for each $\ve>0$,
   \begin{equation}\label{2.21}
   \lim_{t\to\infty}\frac 1{\ln t}\ln P\{\sup_{0\leq s\leq t}|\Xi(s)-Q(s)|
   >\ve\sqrt t\}=-\infty.
   \end{equation}
   We can do this for each $|\Psi_i(s)-G_i(s)|$ separately obtaining
   $G_i$ via the Skorokhod embedding as in \cite{Ki2} but for $\Xi$ we need
   a multidimensional version of strong approximations for which estimates
   of the form (\ref{2.21}) do not seem to be directly available though
   they should be possible under appropriate conditions. For instance,
   (\ref{2.21}) would hold true if for any $m\in\bbN,\, t>0$ and some $C,\ve>0$,
   \begin{equation}\label{2.22}
   E\sup_{0\leq s\leq t}|\Xi(s)-Q(s)|^m\leq Ct^{m(\frac 12-\ve)}.
   \end{equation}
   The latter inequality can be obtained via estimates of the form
   \[
   P\{ |A_n-B_n|\geq\al_n\}\leq\al_n,
   \]
   where $A_n=\Psi_i(n)-\Psi_i(n-1)$ and $B_n=G_i(n)-G_i(n-1)$, which appear
   in proofs of usual multidimensional strong approximation theorems (cf., 
   for instance, Section 2.4 in \cite{MP}).
   \end{remark}

  \section{Strong approximations}\label{sec3}\setcounter{equation}{0}

The following result which is a part of Corollary 3.6 from \cite{KV}
will be a basis of estimates here.
\begin{proposition}\label{prop3.1}
Let $\cG$ and $\cH$ be $\sig$-subalgebras on a probability space
$(\Om,\cF,P)$, $X$ and $Y$ be $d$-dimensional random vectors and $f=
f(x,\om)$ be collections of random variables that are continuously
 (or separable) dependent on $x\in\bbR^d$ for almost all $\omega$,
 measurable with respect to $\cH$ and satisfy
\begin{eqnarray*}
&\|f( x,\omega)-f( y,\omega)\|_{q}\le C_1 (1+|x|^\iota + |y|^\iota)
|x-y|^\ka\\
&\mbox{and}\,\,\|f(x,\omega)\|_{q}\le C_2 (1+|x|^\iota).\nonumber
\end{eqnarray*}
Set $\tilde f(x,\om)=E(f(x,\cdot)|\cG)(\om)$ and $g(x)=Ef(x,\om)$. 

(i) Assume that  $q\ge p$,   $1\ge \ka>\te>\frac dp$ and $\frac {1}{a}\geq
\frac {1}{p}+\frac {\iota+2}{m}+\frac {\del}{q}$. Then  
\begin{equation}\label{3.1}
\| E\big(f(X,\cdot)|\cG\big)-g(X)\|_a\leq c\big(1+\| X\|^{\io+2}_{m})
(\vp_{q,p}(\cG,\cH)
+\| X-E(X|\cG)\|^\delta_{q}\big)
\end{equation}
where $c=c(\io,\ka,\te,p,q,a,\del,d)>0$ depends only on the parameters in 
brackets. 

(ii) Furthermore, let
$x=(v,z)$ and $X=(\Pi,\Up)$, where $\Pi$ and $\Up$ are $d_1$ and 
$d-d_1$-dimensional
random vectors, respectively, and let $f(x,\om)=f(v,z,\om)$ satisfy the
conditions above in $x=(v,z)$. Set $\tilde g(v)=E[f(v,\Up(\om),\om)]$. 
Then
\begin{eqnarray}\label{3.2}
&\| E\big(f(\Pi,\Up,\cdot)|\cG\big)-\tilde g(\Pi)\|_a\leq c\,(1+\| X\|^{\io+2}_{m})\\
&\times\big(\vp_{q,p}(\cG,\cH)+\| \Pi-E(\Pi|\cG)\|^\delta_q+\| \Up-E(\Up|\cH_i)
\|^\delta_q\big).
\nonumber
\end{eqnarray}

(iii) Furthermore, for $a,p,q,\iota,m,\del$ satisfying conditions of (i),
\begin{eqnarray}\label{3.3}
&\| \tilde f(X(\om),\om)-\tilde f(Y(\om),\om)-g(X)+g(Y)\|_a\\
&\leq c\,\vp_{q,p}(\cG,\cH)\big(1+\| X\|^{\io+2}_{m}+\| 
Y\|^{\io+2}_{m}\big)\| X-Y\|^\del_{q}\nonumber
\end{eqnarray}
where $c=c(\io,\ka,\te,p,q,a,\del,d)>0$ depends only on the parameters in 
brackets.
\end{proposition}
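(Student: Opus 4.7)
The plan is to derive all three estimates from a common template: decouple the random argument from the random field $f(\cdot,\om)$ by conditioning on $\cG$, then bound the remaining error for a \emph{deterministic} argument via (2.1) and upgrade to the random argument by a chaining/localization argument. I write $\tilde X = E(X|\cG)$ and $\tilde\Up = E(\Up|\cH)$, and I shall repeatedly use the generalized H\"older inequality for a product of three factors in $L^{m/(\io+2)}$, $L^p$ and $L^{q/\del}$; such a product lies in $L^a$ precisely when $\frac 1a \ge \frac{\io+2}m + \frac 1p + \frac{\del}q$, which is the bookkeeping behind the stated exponent constraint.

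For part (i) I decompose
\begin{equation*}
E(f(X,\cdot)|\cG) - g(X) = \bigl[E(f(X,\cdot)|\cG) - E(f(\tilde X,\cdot)|\cG)\bigr] + \bigl[E(f(\tilde X,\cdot)|\cG) - g(\tilde X)\bigr] + \bigl[g(\tilde X) - g(X)\bigr].
\end{equation*}
The outer two brackets inherit the H\"older modulus of $f$ and of $g$, and after splitting the prefactor $(1+|X|^\io+|\tilde X|^\io)$ off in $L^{m/(\io+2)}$ and absorbing the gap between $\ka$ and $\del$ into the polynomial part, they contribute a term $c(1+\|X\|_m^{\io+2})\|X-\tilde X\|_q^\del$. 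The middle bracket is the substantive one: since $\tilde X$ is $\cG$-measurable, the quantity $h(x,\om):=E(f(x,\cdot)|\cG)(\om)-g(x)$ is a deterministic-parameter family evaluated at the random point $\tilde X(\om)$, and for each fixed $x$ the definition (2.1) gives $\|h(x,\cdot)\|_p\leq\varpi_{q,p}(\cG,\cH)\|f(x,\cdot)\|_q\leq c\,\varpi_{q,p}(\cG,\cH)(1+|x|^\io)$.

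The main obstacle is inserting the random $\tilde X$ into this pointwise-in-$x$ bound; this is where the hypothesis $\del<\ka-d/p$ becomes essential. My plan is a chaining argument in $\bbR^d$: restrict to $\{|\tilde X|\leq R\}$, cover this ball by $O((R/\eta)^d)$ grid points, and use the H\"older continuity of $h(\cdot,\om)$ (inherited from $f$ after conditional Jensen in $L^q$) to bound its oscillation on each cell by $C\eta^\ka(1+R^\io)$. Bounding the maximum over the grid in $L^p$ loses a factor $(R/\eta)^{d/p}$, and optimizing $\eta$ against $R$ converts the H\"older exponent $\ka$ into the residual $\ka-d/p$, which I then further relax to the declared $\del$. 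The complement $\{|\tilde X|>R\}$ is handled by Markov's inequality against the moment $\|\tilde X\|_m\leq\|X\|_m$ together with the trivial pointwise bound. Collecting the three H\"older factors yields (3.1).

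Part (ii) reduces to (i) by one additional decoupling step: replace $\Up$ by $\tilde\Up$ inside $f(v,\cdot,\om)$, pay the H\"older cost $\|\Up-\tilde\Up\|_q^\del$, and observe that $f(v,\tilde\Up(\om),\om)$ is $\cH$-measurable, so (i) applies to the modified random field. For part (iii) I apply the same template to the difference field $\Del(x,y,\om):=f(x,\om)-f(y,\om)$, whose hypothesis supplies a built-in factor $|x-y|^\ka$ in both its $L^q$-norm and its $(x,y)$-oscillation modulus. The pointwise inner estimate now reads $\|E(\Del(x,y,\cdot)|\cG)-(g(x)-g(y))\|_p\leq c\,\varpi_{q,p}(\cG,\cH)(1+|x|^\io+|y|^\io)|x-y|^\ka$, so after chaining in the variables $(x,y)$ and the familiar $\ka\mapsto\del$ reduction the mixing coefficient appears \emph{multiplied} by $\|X-Y\|_q^\del$ rather than added to it, yielding (3.3).
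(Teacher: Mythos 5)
The paper itself offers no proof of Proposition \ref{prop3.1}: it is imported verbatim as ``a part of Corollary 3.6 from \cite{KV},'' so the only meaningful comparison is with the argument in that source. Your plan coincides with it in all essentials: the pointwise-in-$x$ estimate $\|E(f(x,\cdot)|\cG)-g(x)\|_p\le\vp_{q,p}(\cG,\cH)\|f(x,\cdot)\|_q$ coming straight from (\ref{2.1}), the three-term decomposition through $\tilde X=E(X|\cG)$, the Kolmogorov--Chentsov-type chaining that converts the $L^p$ increment bound with exponent $\ka$ into a sup bound with exponent $\ka-d/p$ (this is exactly the role of the hypothesis $\ka>\te>d/p$ and of the auxiliary lemmas preceding Corollary 3.6 in \cite{KV}), the generalized H\"older inequality with exponents $m/(\io+2)$, $p$, $q/\del$ explaining the constraint on $1/a$ and the appearance of $\io+2$ rather than $\io$, and the reduction of (ii) to (i) by first replacing $\Up$ with $E(\Up|\cH)$ so that the modified field is genuinely $\cH$-measurable. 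Two places in your sketch are stated more loosely than they can be executed. First, ``bound its oscillation on each cell by $C\eta^\ka(1+R^\io)$'' is not a consequence of the single-scale $L^q$ increment bound; the cell oscillation is itself what the multiscale (dyadic) chaining produces, and what comes out is the exponent $\ka-d/p$, not $\ka$ --- your subsequent sentence gets the right answer, but the intermediate claim as written is not available. Second, in (iii) you should not chain the two-variable field $\Del(x,y,\om)$ over $\bbR^{2d}$: the Kolmogorov condition there would require $\ka>2d/p$, which is not assumed. The correct route, consistent with the form of (\ref{3.3}), is to chain the one-variable field $h(x,\om)=\tilde f(x,\om)-g(x)$, whose increments $\|h(x,\cdot)-h(y,\cdot)\|_p$ already carry the factor $\vp_{q,p}(\cG,\cH)$, obtain an $L^p$ control of its weighted $\te$-H\"older seminorm, and only then evaluate at the random pair $(X,Y)$; this is also why (\ref{3.3}) contains no additive $\|X-E(X|\cG)\|_q^\del$ term, the random arguments being plugged directly into $\tilde f$. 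With these two repairs your outline is a faithful reconstruction of the proof in \cite{KV}.
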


Observe that the conditions of Proposition \ref{prop3.1} are satisfied
in all our applications below in view of (\ref{2.5}) and (\ref{2.6}).
We will rely on the following result which in a close form appears as 
Theorem 1.3 in \cite{Zh} (see also \cite{Ph} and references there).
\begin{theorem}\label{thm3.2}
Let $\{ M(n),\,\cG_n\}_{n=1}^\infty,\, M(n)=(M_1(n),...,M_\ell(n))$ be
a square-integrable sequence of $\bbR^\ell$-valued martingale differences on 
a probability space $(\Om,\cG,P)$. Define conditional covariance matrices
$\sig(n)=\sig_{ij}(n))_{1\leq i,j\leq\ell}$ by
\begin{equation*}
\sig_{ij}(n)=E(M_i(n)M_j(n)|\cG_{n-1})
\end{equation*}
and set $\Sig(n)=\sum_{k=1}^n\sig(k)$. Suppose that there exists a covariance
matrix $D$ and a constant $\gam\in(0,1)$ such that
\begin{equation}\label{3.4}
\Sig(n)-nD\ll n^{1-\gam}\,\,\mbox{a.s. or}\,\, E\|\Sig(n)-nD\|=O(n^{1-\gam}),
\end{equation}
where $\|\cdot\|$ is the Euclidean matrix norm, and
\begin{equation}\label{3.5}
\sum_{n=1}^\infty n^{\gam-1}E\big(\| M(n)\|^2\bbI_{\| M(n)\|^2\geq 
n^{1-\gam}}\big)<\infty.
\end{equation}
Then without changing its distribution the sequence $\{ M(n),\, n\geq 1\}$
can be redefined on a richer probability space where there exists a sequence
$\{ Y(n),\, n\geq 1\}$ of independent identically distributed (i.i.d.) random
vectors with the covariance matrix $D$ and an $\ell$-dimensional Brownian
motion $Z(t),\, t\geq 0$ with the covariance matrix $D$ such that
\begin{equation}\label{3.6}
\sum_{k\leq n}(M(k)-Y(k))\ll n^{\frac 12-\ka}\quad\mbox{and}
\end{equation}
\begin{equation}\label{3.7}
\sum_{k\leq t}M(k)-Z(t)\ll t^{\frac 12-\ka}
\end{equation}
where $\ka>0$ does not depend on $n$ or on $t$.
\end{theorem}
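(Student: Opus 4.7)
The plan is to reduce the vector martingale strong approximation to a block-wise i.i.d.\ Gaussian approximation, in the spirit of Strassen--Philipp--Berkes. I would first choose a block scheme $n_k = [k^{\alpha}]$ for some $\alpha>1$ tuned to the exponent $\gam$ in (\ref{3.4})--(\ref{3.5}), and set $B_k=\sum_{n\in I_k} M(n)$ with $I_k=(n_{k-1},n_k]$. The aim is to show that the conditional law of $B_k$ given $\cG_{n_{k-1}}$ is close, in a smooth metric, to the Gaussian law $\cN(0,\Sig(n_k)-\Sig(n_{k-1}))$, and that this conditional covariance is in turn close to $(n_k-n_{k-1})D$ by virtue of (\ref{3.4}).

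The first step is a quantitative conditional CLT within each block: using (\ref{3.5}) as a conditional Lindeberg-type input, I would bound $\|E e^{i\langle u,B_k\rangle}|\cG_{n_{k-1}}] - e^{-\frac12 u^{\top}(\Sig(n_k)-\Sig(n_{k-1}))u}\|$ by a power of the block length, by a standard martingale characteristic function argument (or by a conditional version of the Lindeberg-Feller telescope on $e^{i\langle u, B_k\rangle}$). The assumption (\ref{3.4}) then lets me replace $\Sig(n_k)-\Sig(n_{k-1})$ by the deterministic matrix $(n_k-n_{k-1})D$ at an additional cost of order $n_k^{1-\gam}$.

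The second step is a coupling construction. Using the Berkes--Philipp conditional quantile coupling lemma (reinjecting independent uniform randomness at each stage to enrich the probability space), I would, on the enlarged space, construct for each $k$ a Gaussian vector $Y_k\sim\cN(0,(n_k-n_{k-1})D)$ independent of $\cG_{n_{k-1}}$ such that $\|B_k-Y_k\|$ is controlled, with probability at least $1-\alpha_k$, by a power of the smooth-metric distance from the previous step, where $\alpha_k$ is summable. Borel--Cantelli then gives $\sum_{j\le k}(B_j-Y_j)\ll n_k^{1/2-\kappa}$ a.s.\ for some $\kappa>0$. Intra-block fluctuations $\sum_{n\in I_k, n\le m}M(n)$ for $m$ inside a block are handled by a maximal Burkholder/Doob inequality for martingales, again yielding a power-saving bound. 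Once a Gaussian array $\{Y_k\}$ with covariance matrices $(n_k-n_{k-1})D$ is in place, the required i.i.d.\ sequence with covariance $D$ in (\ref{3.6}) is obtained by splitting each $Y_k$ into $n_k-n_{k-1}$ i.i.d.\ $\cN(0,D)$ pieces, and the Brownian motion $Z(t)$ in (\ref{3.7}) comes by realizing the $Y_k$ as increments of a single $\ell$-dimensional Brownian motion, since an i.i.d.\ Gaussian sum admits an obvious a.s.\ embedding into Brownian motion with zero error on integer times and $O(1)$ oscillation between them.

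The hard part, and the place where the value of $\kappa$ is actually lost, is the Berkes--Philipp coupling step in dimension $\ell>1$: unlike in dimension one, there is no clean Skorokhod embedding, and one must balance the smoothness-metric accuracy from the conditional CLT against the dimension-dependent loss in turning a smooth-metric bound into an almost-sure coupling bound. Matching the exponent saving in (\ref{3.4}) with the exponent loss in the coupling fixes the admissible $\alpha$ and the resulting $\kappa$, and all the other steps are essentially bookkeeping around this trade-off.
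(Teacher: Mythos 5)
You should first note that the paper does not actually prove Theorem \ref{thm3.2}: it is imported verbatim (in ``a close form'') from Theorem 1.3 of \cite{Zh}, and the only argument the author supplies is the remark that (\ref{3.7}) follows from (\ref{3.6}) by a standard extension argument based on the Kolmogorov extension theorem as in Section 2.4 of \cite{MP}. Your sketch is therefore not competing with a proof in the paper but with the proof in \cite{Zh}, and in outline it is the same classical strategy (blocking, a quantitative conditional CLT for block sums, the Berkes--Philipp conditional coupling lemma on an enriched space, Borel--Cantelli plus a martingale maximal inequality for intra-block fluctuations, and reassembly of the Gaussian blocks into i.i.d.\ increments and a Brownian motion). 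So the route is the right one and is essentially the one underlying the cited result.

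Two places in your outline are glossed over in a way that matters if this were to be a complete proof. First, condition (\ref{3.5}) is a weighted Lindeberg-type condition and by itself does not yield a \emph{rate} in the conditional CLT for a block; the standard argument first truncates, setting $M'(n)=M(n)\bbI_{\{\| M(n)\|^2<n^{1-\gam}\}}$ recentered to remain a martingale difference, uses (\ref{3.5}) together with the Kronecker lemma and Borel--Cantelli to show the truncated tails contribute $o(N^{(1-\gam)/2})$ to the partial sums, and only then runs the characteristic-function telescope on the bounded variables, whose third/fourth conditional moments now admit power bounds giving the required smooth-metric rate. Without this truncation the claim that the characteristic function discrepancy is ``a power of the block length'' is not justified. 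Second, the almost sure alternative in (\ref{3.4}) carries a random constant in the relation $\ll$, so replacing $\Sig(n_k)-\Sig(n_{k-1})$ by $(n_k-n_{k-1})D$ uniformly requires a localization (stopping-time) argument, not just a direct substitution; the $L^1$ alternative is the one that plugs in directly. Finally, your closing step from the i.i.d.\ Gaussian sequence to the Brownian motion is fine, but note that the inter-integer oscillation of $Z$ is of order $\sqrt{\log n}$ rather than $O(1)$; this is still harmless for (\ref{3.7}), and is exactly the point the paper itself handles by the Kolmogorov extension argument of \cite{MP}.
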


Actually, only (\ref{3.6}) is obtained in \cite{Zh} but, in fact, (\ref{3.7})
follows from (\ref{3.6}) by a standard argument based on the Kolmogorov
extension theorem (see Section 2.4 in \cite{MP}). We observe also that if
the original probability space $(\Om,\cF,P)$ is already rich enough to have 
a uniformly distributed random variable independent of the sequence 
$\{ M(n),\, n\geq 1\}$ then the sequence $\{ Y(n),\, n\geq 1\}$ and the 
process $\{ Z(t),\, t\geq 0\}$ can already be constructed on $(\Om,\cF,P)$
and there is no need in enrichment and in redefinitions.

Our Theorem \ref{thm2.2} will follow immediately from the following result.

\begin{proposition}\label{prop3.3} Suppose that Assumption \ref{ass2.1} holds
 true. Then without changing its distributions the vector process 
 $\Psi(t)=(\Psi_1(t),...,\Psi_\ell(t)),\, t\geq 0$ (with $\Psi_i$ the same as 
 in (\ref{2.9})) can be redefined on a richer probability space
 where there exists an $\ell$-dimensional Gaussian process $G(t)=(G_1(t),...,
 G_\ell(t))$ with stationary independent increments having covariances
 $EG_i(s)G_j(t)=a_{ij}s\wedge t$ for some nonnegatively definite matrix
 $A=(a_{ij},\, 1\leq i,j\leq\ell)$ and such that for some constant $\gam>0$,
 \begin{equation}\label{3.8}
 \Psi(t)-G(t)\ll t^{\frac 12-\gam}\quad\mbox{a.s.}
 \end{equation}
 \end{proposition}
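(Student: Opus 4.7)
The plan is to deduce Proposition~\ref{prop3.3} from Theorem~\ref{thm3.2} by applying the latter to a carefully chosen $\ell$-dimensional martingale difference sequence whose partial sums mimic $\Psi(t)$. I would work in the ``wall time'' $t\in\bbN$ rather than in the step index $n$, so that the $\ell$-dim Brownian motion $\mathbf Z(t)$ produced by Theorem~\ref{thm3.2}, with some covariance matrix $D=(D_{ij})$, directly provides the process $G:=\mathbf Z$ with the covariance structure $EG_i(s)G_j(t)=D_{ij}(s\wedge t)$ required by the proposition (taking $a_{ij}=D_{ij}$). Fixing a slowly growing buffer size $h=h(t)$, I would first construct an approximation $\tilde Y_i(n)$ of $Y_i(n):=F_i(X(n),X(2n),\ldots,X(in))$ that is $\cF_{in-h,in+h}$-measurable, so the approximation error is governed by $\beta_q(h)$ via (\ref{2.3}), and then define, with respect to the common filtration $\cG_t:=\cF_{-\infty,t+h}$,
\[
\mathbf M(t)=(M_1(t),\ldots,M_\ell(t)),\qquad
M_i(t)=\bigl(\tilde Y_i(t/i)-E[\tilde Y_i(t/i)\mid\cG_{t-1}]\bigr)\bbI_{i\mid t}.
\]
Using the centering identity (\ref{2.9}), Proposition~\ref{prop3.1} and the decay rates in Assumption~\ref{ass2.1}, a standard blocking argument should give, for each $i$ and some $\gam'>0$,
\[
\Bigl|\Psi_i(T)-\sum_{t\le T}M_i(t)\Bigr|\ll T^{\frac12-\gam'}\quad\mbox{a.s.}
\]

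Next I would verify the two hypotheses of Theorem~\ref{thm3.2}. For the conditional covariances $\sig_{ij}(t)=E[M_i(t)M_j(t)\mid\cG_{t-1}]$, stationarity (\ref{2.4}) and Proposition~\ref{prop3.1}(iii) applied to the bilinear functionals $\tilde Y_i\cdot\tilde Y_j$, combined with a coupling estimate for pairs $(n,m)$ whose index-sets $\{n,2n,\ldots,in\}$ and $\{m,2m,\ldots,jm\}$ are well-separated, should yield $E\|\Sig(T)-TD\|=O(T^{1-\gam})$ for a positive-semidefinite $D$. For (\ref{3.5}), (\ref{2.6}) together with the moment hypothesis $\gam_{2q(\io+2)}<\infty$ in Assumption~\ref{ass2.1} gives $\sup_t\|\mathbf M(t)\|_{2+\del'}<\infty$ for some $\del'>0$, whence $E[\|\mathbf M(t)\|^2\bbI_{\|\mathbf M(t)\|^2\ge t^{1-\gam}}]\le Ct^{-(1-\gam)\del'/2}$, which is summable against the weight $t^{\gam-1}$ for appropriately chosen $\gam$.

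Theorem~\ref{thm3.2} will then produce, on a richer probability space, an $\ell$-dim Brownian motion $\mathbf Z(t)$ with covariance matrix $D$ satisfying $\sum_{s\le t}\mathbf M(s)-\mathbf Z(t)\ll t^{1/2-\ka}$. Combining with the martingale approximation above yields $\Psi(t)-\mathbf Z(t)\ll t^{1/2-\gam}$ with $\gam:=\min(\gam',\ka)$, and setting $G:=\mathbf Z$ proves (\ref{3.8}).

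The main obstacle is the first step: constructing a \emph{single} joint martingale approximation, adapted to a common filtration, whose $\ell$ components simultaneously approximate the differently-scaled partial sums $\Psi_i$ with polynomial residuals. In \cite{Ki2} each $\Psi_i$ was treated separately via Skorokhod embedding, so the common-filtration construction---together with the concomitant polynomial-rate bound on the conditional covariance matrix needed for (\ref{3.4})---is the genuinely new ingredient here and demands a careful choice of the buffer $h=h(t)$ finely tuned to the decay rates $\varpi_{q,p}$ and $\beta_q$ in Assumption~\ref{ass2.1}.
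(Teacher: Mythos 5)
Your overall strategy---reduce $\Psi$ to a single $\ell$-dimensional martingale difference array adapted to one common filtration, verify (\ref{3.4}) and (\ref{3.5}), and invoke Theorem \ref{thm3.2}---is exactly the paper's. But the central construction you display does not work, and this is a genuine gap rather than an omitted routine step. You set $M_i(t)=\bigl(\tilde Y_i(t/i)-E[\tilde Y_i(t/i)\mid\cG_{t-1}]\bigr)\bbI_{i\mid t}$ with $\cG_t=\cF_{-\infty,t+h}$. Since $\tilde Y_i(n)$ is built from data localized near the times $n,2n,\dots,in$ with window $h$, it lies within $O(\beta_q^\delta(h-1))$ (in the relevant norm, via (\ref{2.5})) of a $\cF_{-\infty,in-1+h}$-measurable variable: the first $i-1$ arguments are already measurable with respect to $\cG_{in-1}$, and $X(in)$ is approximated by $E(X(in)\mid\cF_{in-h+1,in+h-1})$, which is also $\cG_{in-1}$-measurable. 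Hence $E[\tilde Y_i(n)\mid\cG_{in-1}]$ is close to $\tilde Y_i(n)$ itself, so $\|M_i(in)\|\lesssim\beta_q^\delta(h-1)\to 0$ and $\sum_{t\le T}M_i(t)$ stays bounded in $L^2$ while $\Psi_i(T)$ has variance of order $T$; the claimed bound $|\Psi_i(T)-\sum_{t\le T}M_i(t)|\ll T^{1/2-\gam'}$ is therefore false for this definition. More generally, a bare one-step centering $Y-E[Y\mid\mbox{past}]$ never telescopes back to $\Psi_i$: the discarded terms $\sum_nE[\tilde Y_i(n)\mid\cG_{in-1}]$ need not be $o(T^{1/2})$. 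You invoke a ``standard blocking argument'' in passing, but your definition contains neither blocks nor the corrector that makes blocking work.

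What the paper actually does (following \cite{Ki2}) supplies precisely the two missing ingredients. First, it groups the summands into big blocks $V_i(j)$ over $(a(j),b(j)]$ with $b(j)=a(j)+[j^\tau]$, separated by small gap blocks $W_i(j)$ of length $[j^\te]$, with conditioning windows $r(j)=[j^\eta]$ and $4\eta<2\te<\tau<\del/4$; the growing gaps ensure that block $j+1$ is far from $\cG_j=\cF_{-\infty,b(j)+r(j)}$, so conditional expectations of future blocks are genuinely small. Second, it uses the Gordin-type corrector $R_i(m)=\sum_{j>m}E(V_i(j)\mid\cG_m)$ and sets $M_i(m)=V_i(m)+R_i(m)-R_i(m-1)$: this is an exact martingale difference \emph{and} its partial sums equal $\sum_{j\le m}V_i(j)$ up to the boundary terms $R_i$, which is what yields (\ref{3.12}). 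The verification of (\ref{3.4}) then rests not on Proposition \ref{prop3.1}(iii) alone but on an induction over the coordinates of $F_i$ and $F_j$ using Proposition \ref{prop3.1}(ii), separating the cases $|kn-k'n'|$ small and large, to prove $|E\Psi_i(t)\Psi_j(t)-D_{ij}t|\le C_0$ with $D_{ij}$ given explicitly by (\ref{3.17})--(\ref{3.18}); your appeal to ``a coupling estimate for well-separated index sets'' points in the right direction but the diagonal terms $in-jn'=\up u$ with $u$ fixed are exactly the ones that contribute to $D_{ij}$ and must be counted, not discarded. Your treatment of (\ref{3.5}) via a $(2+\del')$-moment bound is fine in spirit and close to the paper's Cauchy--Schwarz/Chebyshev reduction to a fourth-moment bound on the blocks.
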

 \begin{proof}
 First, we recall the block construction from \cite{Ki2}.
 We will use the following notations from \cite{Ki2}
\begin{eqnarray}\label{3.9}
&F_{i,r,n}(x_1,x_2,\ldots, x_{i-1},\omega)
=E\big(F(x_1,x_2,\ldots, x_{i-1},X(n))|\cF_{n-r,n+r}\big),\\
&X_r(n)=E\big(X(n)|\cF_{n-r,n+r}\big),\,\,Y_i(q_i(n))=F_i(X(q_1(n)),\ldots,
 X(q_i(n)))\quad\mbox{and}\nonumber\\
&Y_i(j)=0\quad{\rm if}\quad j\ne q_i(n)\quad\mbox{for any}\,\, n,\,\,
Y_{i,r}( q_i(n))=F_{i,r,q_i(n)} (X_r(q_1(n)),\nonumber\\
&\ldots, X_r(q_{i-1}(n)),\omega)\quad\mbox{and}\quad Y_{i,r}(j)=0 
\quad{if}\quad j\ne q_i(n)\quad\mbox{for any}\,\, n.\nonumber
\end{eqnarray}
Next, we fix some positive numbers $4\eta<2\te<\tau<\del/4$ where
$\del$ is the same as in Assumption \ref{ass2.1}. 
 Now, we introduce pairs of "big" and "small" increasing
 blocks (somewhat differently than in \cite{Ki2}) defining for each $i$
  random variables $V_i(j)$ and $W_i(j)$ inductively so that
\begin{eqnarray}\label{3.10}
&\quad V_i(1)=Y_{i,1}(q_i(1)),\, W_i(1)=Y_{i,1}(q_i(2)),\, a(1)=0,\, b(1)=1\,\,
\mbox{and for}\,\, j>1,\\
& a(j)=b(j-1)+[(j-1)^\te],\,\, b(j)=a(j)+[j^\tau],\,\, r(j)=[j^\eta],
\nonumber\\
&V_i(j)=\sum_{a(j)< il\leq b(j)}Y_{i,r(j)}(il)\,\,
\mbox{and}\,\, W_i(j)=\sum_{b(j)<il\leq a(j+1)}Y_{i,r(j)}(il).\nonumber
\end{eqnarray}
 Next, set
\begin{equation}\label{3.11}
R_i(m)=\sum_{j=m+1}^\infty E\big( V_i(j)|\cG_m\big)
\end{equation}
and $M_i(m)=V_i(m)+R_i(m)-R_i(m-1)$ where $\cG_m=\cF_{-\infty, b(m)+r(m)}$. 
In the same way as in Section 3 of \cite{Ki2} we see that 
$(M_i(m),\,\cG_m)_{m\geq 1}$ is a martingale differences sequence for each
$i=1,...,\ell$, and so $M(m)=(M_1(m),...,M_\ell(m)),\, m\geq 1$ is a vector
martingale differences sequence. Now, proceeding similarly to Section 3 in 
\cite{Ki2} we obtain that for some $\ve>0$,
\begin{equation}\label{3.12}
\|\Psi(t)-\sum_{1\leq j\leq\nu(t)}M(j)\|\ll t^{\frac 12-\ve}\quad\mbox{a.s}
\end{equation}
where $\nu(t)=\max\{ j:\, b(j)+[j^\te]\leq t\}$ and $\|\cdot\|$ is the
 Euclidean norm in $\bbR^\ell$.
 
 In order to complete the proof of Proposition \ref{prop3.3} it remains 
 to veryfy the conditions of Theorem \ref{thm3.2} for the vector martingale
 differences sequence  $(M(m),\,\cG_m)_{m\geq 1}$. In Section 4 of \cite{Ki2}
 we showed relying on Proposition \ref{prop3.1} that for each $i=1,...,\ell$,
 \begin{equation}\label{3.13}
 E\Psi_i^2(t)-\sum_{1\leq l\leq\nu(t)} M_i^2(l)\ll t^{1-\ve}\quad\mbox{a.s}
 \end{equation}
 for some $\ve>0$ independent of $t$. Essentially, the same proof shows that
 for any $i,j=1,...,\ell$,
 \begin{equation}\label{3.14}
 E\Psi_i(t)\Psi_j(t)-\sum_{1\leq l\leq\nu(t)} M_i(l)M_j(l)\ll t^{1-\ve}
 \quad\mbox{a.s}
 \end{equation}
 for some $\ve>0$ independent of $t$. Existence and a description of the
 limit 
 \begin{equation}\label{3.15}
 \lim_{t\to\infty}t^{-1}E\Psi_i(t)\Psi_j(t)=D_{ij}
 \end{equation}
 was provided by Proposition 4.1 from \cite{KV}. In fact, it turns out that
 \begin{equation}\label{3.16}
 |E\Psi_i(t)\Psi_j(t)-D_{ij}t|\leq C_0
 \end{equation}
 for some $C_0>0$ independent of $t$ which is actually hidden inside of the
 proof in \cite{KV}, though, of course, a bound $C_0t^{1-\ve},\, \ve>0$
 in the right hand side of (\ref{3.16}) would suffice for our purposes, as well.
 The corresponding arguments are essentially contained at the end of Section 4
 in \cite{Ki2} but for readers' convenience we explain them here too.
 
 Recall, that according to Proposition 4.1 of \cite{KV} (see also Lemma 4.4
 there) the limit in (\ref{3.15}) can be written in the form
 \begin{equation}\label{3.17}
D_{ij}=\frac {\up}{ij}\sum_{u=-\infty}^\infty a_{ij}(u,2u,...,\up u)
\end{equation}
where $\up$ is the greatest common divisor of $i$ and $j$ with $i=\up i'$,
$j=\up j'$ and $i',j'$ being coprime and
\begin{eqnarray}\label{3.18}
&a_{ij}(u,2u,...,\up u)=\int F_i(x_1,...,x_i)F_j(y_1,...,y_j)\\
&\prod_{\sig\not\in(i',2i',...,\up i')}d\mu(x_\sig)
\prod_{\sig'\not\in(j',2j',...,\up j')}d\mu(y_{\sig'})\prod_{\eta=1}^\up
d\mu_{\eta u}(x_{\eta i'},y_{\eta j'})\nonumber
\end{eqnarray}
where $d\mu_0(x,y)=\del_{x,y}d\mu(x)$ is the measure supported by the
diagonal.

Next, we have
\begin{equation}\label{3.19}
E\Psi_i(t)\Psi_j(t)=\sum_{0\leq n\leq t/i,\, 0\leq n'\leq t/j}b_{ij}(n,n')
\end{equation}
where
\[
b_{ij}(n,n')=EF_i(X(n),...,X(in))F_j(X(n'),...,X(jn')).
\]
Suppose that $|in-jn'|=m\leq\frac 1{4\ell}\max(n,n')$ and let, for instance,
$in-jn'=m$. Then $\ell n\geq n'\geq (n-m)/\ell$, and so $\max(n,n')\leq\ell n$
and $m\leq n/4$. It follows that 
\begin{eqnarray*}
 &\min(in,jn')-\max((i-1)n,(j-1)n')=\min(n-m,n')\\
 &\geq (n-m)/\ell\geq\frac {3n}{4\ell}\geq\frac {3\max(n,n')}{4\ell^2}.
\end{eqnarray*}
The same argument works for the case $jn'-in=m$, as well.
Hence we can apply Proposition \ref{prop3.1}(ii) with 
\begin{eqnarray*}
&\cG=\cF_{-\infty,\min(in,jn')-\frac 1{2\ell^2}\max(n,n')},\,\cH=
\cF_{\min(in,jn')-\frac 1{4\ell^2}\max(n,n')},\\
& \Pi=(X(n),...,X((i-1)n);X(n'),...,X((j-1)n')))\,\,\mbox{and}
\,\,\Up=(X(in),X(jn'))
\end{eqnarray*}
which yields that
\begin{eqnarray}\label{3.20}
&|b_{ij}(n,n')-\int EF_i(X(n),...,X((i-1)n),x)F_j(X(n'),...,\\
&X((i-1)n'),y)d\mu(x,y)|\leq C_1(\vp_{q,p}(\frac 1{4\ell^2}\max(n,n'))
+\be_q^\del(\frac 1{4\ell^2}\max(n,n')))\nonumber
\end{eqnarray}
for some $C_1>0$ independent of $n$ and $n'$.
 
We proceed by induction dealing with the case $|in-jn'|=m>\frac 1{4\ell}
\max(n,n')$ by the argument below. Suppose that we already proved that
for some $k<i$ and $k'<j$,
\begin{eqnarray}\label{3.21}
&|b_{ij}(n,n')-\int EF_i(X(n),...,X(kn),x_{k+1},...,x_i)F_j(X(n'),...,\\
&X(k'n'),y_{k'+1},...,y_j)d\nu(x_{k+1},...,x_i,y_{k'+1},...,y_j)|\leq\ve_{k,k'}
(n,n').\nonumber
\end{eqnarray}
If $|kn-k'n'|\leq\frac 1{4\ell^2}\max(n,n')$ then in the same way as in 
(\ref{3.20}) we obtain that 
\begin{eqnarray}\label{3.22}
&|b_{ij}(n,n')-\int EF_i(X(n),...,X((k-1)n),x_k,...,x_i)F_j(X(n'),...,\\
&X((k'-1)n'),y_{k'},...,y_j)d\mu_{kn-k'n'}(x_k,y_{k'})\nu(x_{k+1},...,x_i,
y_{k'+1},...,y_j)|\nonumber\\
&\leq\ve_{k,k'}(n,n')+C_2(\vp_{q,p}(\frac 1{4\ell^2}\max(n,n'))
+\be_q^\del(\frac 1{4\ell^2}\max(n,n')))\nonumber
\end{eqnarray}
for some $C_2>0$ independent of $n$ and $n'$. On the other hand, if
$|kn-k'n'|>\frac 1{4\ell}\max(n,n')$ then
\[
\max(kn,k'n')\geq\max\big(\min(kn,k'n'),\,\max((k-1)n,(k'-1)n')\big)+
\frac 1{4\ell^2}\max(n,n'),
\]
and so we can apply again Proposition \ref{prop3.1}(ii) with
\begin{eqnarray*}
&\cG=\cF_{-\infty,\max(kn,k'n')-\frac 1{6\ell^2}\max(n,n')},\,\cH=
\cF_{\max(kn,k'n')-\frac 1{12\ell^2}\max(n,n')},\\
& \Pi=\big(X(n),...,X((k-1)n);X(n'),...,X((k'-1)n');X(\min(kn,k'n'))\big)
\end{eqnarray*}
and $\Up=X(\max(kn,k'n'))$ to obtain that
\begin{eqnarray}\label{3.23}
&|b_{ij}(n,n')-\int EF_i(X(n),...,X((k-1)n),U_{kn},x_{k+1},...,x_i)F_j(X(n'),
...,\\
&X((k'-1)n'),U_{k'n'},y_{k'+1},...,y_j)d\mu(U_{\max(kn,k'n')})\nu(x_{k+1},...
,x_i,y_{k'+1},...,y_j)|\nonumber\\
&\leq\ve_{k,k'}(n,n')+C_3(\vp_{q,p}(\frac 1{12\ell^2}\max(n,n'))
+\be_q^\del(\frac 1{12\ell^2}\max(n,n')))\nonumber
\end{eqnarray}
for some $C_3>0$ independent of $n$ and $n'$ where $U_{\min(kn,k'n')}=
X(\min(kn,k'n'))$. In particular, if $k=i$ and $k'=j$ we obtain from
(\ref{2.9}) that
\begin{equation}\label{3.24}
|b_{ij}(n,n')|\leq C_3(\vp_{q,p}(\frac 1{12\ell^2}\max(n,n'))
+\be_q^\del(\frac 1{12\ell^2}\max(n,n'))).
\end{equation}
This together with the above induction argument yields that
\begin{equation}\label{3.25}
|b_{ij}(n,n')-a_{ij}(u,2u,...,\up u)|\leq C_4(\vp_{q,p}(\frac 1{12\ell^2}
\max(n,n'))+\be_q^\del(\frac 1{12\ell^2}\max(n,n')))
\end{equation}
for some $C_4>0$ independent of $n$ and $n'$ provided $ni-jn'=\up u$ and
$\up$ is the greatest common divisor of $i$ and $j$ with $a_{ij}$ defined
by (\ref{3.18}).

It is not difficult to see (the explanation can be found in the proof of
Lemma 4.4 of \cite{KV}) that the number of integer solutions of $in-jn'
=\up u$ with $in,\, jn'\leq t$ can differ from $[\up t(ij)^{-1}]$ by at
most a constant independent of $t$. This together with 
(\ref{3.17})--(\ref{3.19}), (\ref{3.25}) and Assumption \ref{ass2.1} 
yields (\ref{3.16}). Taking into account (\ref{3.14}) we conclude that 
the condition (\ref{3.4}) of Theorem \ref{thm3.2} is satisfied for the
vector martingale differences sequence $\{ M(j),\, 1\leq j\leq\nu(t)\}$
constructed above.

Next, we verify the condition (\ref{3.5}). By the Cauchy--Schwarz and
the Chebyshev inequalities
\begin{eqnarray}\label{3.26}
&A(n)=\frac 1{n^{1-\gam}}E\| M(n)\|^2\bbI_{\{\| M(n)\|^2\geq n^{1-\gam}\}}\\
&\leq \frac 1{n^{1-\gam}}\big(E\| M(n)\|^4\big)^{1/2}\big( P\{\| M(n)\|^2\geq
 n^{1-\gam}\}\big)^{1/2}\leq\frac 1{n^{2(1-\gam)}}E\| M(n)\|^4.\nonumber
\end{eqnarray}
Now
\begin{eqnarray}\label{3.27}
&\| M(n)\|\leq\sum_{i=1}^\ell |M_i(n)|\leq \sum_{i=1}^\ell\big(|V_i(n)|+|R_i(n)|
-|R_i(n-1)|\big)\\
&\leq\sum_{i=1}^\ell\sum_{a(n)< il\leq b(n)}|Y_{i,r(n)}|+
\sum_{i=1}^\ell\big(|R_i(n)|-|R_i(n-1)|\big).\nonumber
\end{eqnarray}
Since $(a_1+\cdots +a_k)^4\leq k^3(a_1^4+\cdots a_k^4)$ we conclude from here
relying on (\ref{2.6}), Assumption \ref{ass2.1} and the H\" older inequality that
for any $n\geq 1$,
\begin{equation}\label{3.28}
E\| M(n)\|^2\leq C_5n^{4\tau}
\end{equation}
where $C_5>0$ is independent of $n$, and so 
\[
A(n)\leq C_5 n^{4\tau-2(1-\gam)}\leq C_5n^{\del-2(1-\gam)}.
\]
Since $\del<1$ we can choose $\gam>0$ so small that $\del-2(1-\gam)<-1$
whence $\sum_{n=1}^\infty A(n)<\infty$ and the condition (\ref{3.5}) holds
true completing the proof of Proposition \ref{prop3.3}.
 \end{proof}
 
 Finally, (\ref{2.12}) follows from (\ref{3.8}) in view of (\ref{2.10}) 
 concluding the proof of Theorem \ref{thm2.2}.    \qed

\section{Almost sure central limit theorem}\label{sec4}\setcounter{equation}{0}

We start the proof of Theorem \ref{thm2.4} with the following
\begin{proposition}\label{prop4.1}
Set
\begin{equation}\label{4.1}
Q^{(s)}_u=s^{-1/2}Q(us)=s^{-1/2}\sum_{j=1}^\ell G_j(ujs),\quad u\in [0,1]
\end{equation}
and define random measures on $\bbC[0,1]$ by 
\begin{equation}\label{4.2}
\zeta_t(\om)=\frac 1{\ln t}\int_1^t\frac {ds}s\del_{Q^{(s)}_\bullet(\om)},\,\,
t>1
\end{equation}
where $\del_{Q^{(s)}_\bullet(\om)}$ is the unit mass concentrated on the 
element $Q^{(s)}_\bullet(\om)\in\bbC[0,1]$. Set also
\begin{equation}\label{4.3}
\nu_n(\om)=\frac 1{\ln n}\sum_{k=1}^nk^{-1}\del_{Q^{(k)}_\bullet},\,\, n\geq 2.
\end{equation}
Then with probability one
\begin{equation}\label{4.4}
\lim_{t\to\infty}\zeta_t=\eta_Q\,\,\mbox{and}\,\,\lim_{n\to\infty}\nu_n=\eta_Q
\end{equation}
where, again, the limit is taken in the sense of weak convergence of 
measures on $\bbC[0,1]$.
\end{proposition}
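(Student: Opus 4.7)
The plan is to adapt the Brosamler--Lacey--Philipp scheme for the almost sure central limit theorem, exploiting the self-similarity of $Q$. Since the covariance $R$ of \eqref{2.13} satisfies $R(rs,rt)=rR(s,t)$, each rescaled process $Q^{(s)}_\bullet$ of \eqref{4.1} has distribution $\eta_Q$ on $\bbC[0,1]$; in particular $E\zeta_t=\eta_Q$ and $E\nu_n=\eta_Q$ for every $t>1$ and $n\geq 2$. By separability of $\bbC[0,1]$, weak convergence of random probability measures to the deterministic limit $\eta_Q$ follows from the a.s.\ convergence of integrals against a countable convergence-determining family of bounded Lipschitz functionals $\phi:\bbC[0,1]\to\bbR$. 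Fix one such $\phi$, centered (i.e.\ $E\phi(Q_\bullet)=0$) with $\|\phi\|_\infty,\|\phi\|_{Lip}\leq 1$; it suffices to prove $\int\phi\,d\zeta_t\to 0$ and $\int\phi\,d\nu_n\to 0$ a.s.

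The core of the argument is a decorrelation estimate. Let $\cH_s$ denote the closed Gaussian subspace generated by $\{G_j(u):1\leq j\leq\ell,\,u\in[0,\ell s]\}$; note that $Q^{(s)}_\bullet\subset\cH_s$. Decomposing $G=LB$ with $LL^T=D$ reduces the projection onto $\cH_s$ to independent Brownian components, and the independent-increment property of $B$ yields
\begin{equation*}
Z_v:=E[Q^{(r)}_v\,|\,\cH_s]=r^{-1/2}\sum_{j=1}^\ell G_j(jvr\wedge\ell s),\qquad W:=Q^{(r)}-Z.
\end{equation*}
The residual $W$ depends only on the increments of $G$ after time $\ell s$ and is therefore independent of $\cH_s$ and hence of $Q^{(s)}_\bullet$. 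Doob's $L^2$ maximal inequality for each Gaussian martingale $G_j$ gives $E\sup_{u\leq\ell s}|G_j(u)|\leq 2\sqrt{D_{jj}\ell s}$, so
\begin{equation*}
E\|Z\|_\infty\leq r^{-1/2}\sum_{j=1}^\ell E\sup_{u\leq\ell s}|G_j(u)|\leq C(s/r)^{1/2}.
\end{equation*}
Let $\tilde Z$ be an independent copy of $Z$, independent also of $(Q^{(s)}_\bullet,W)$. Then $\tilde Q^{(r)}:=\tilde Z+W$ has the same covariance, hence the same law, as $Q^{(r)}$ and is independent of $Q^{(s)}_\bullet$. Since $\phi$ is centered, $E[\phi(Q^{(s)}_\bullet)\phi(\tilde Q^{(r)}_\bullet)]=0$, and consequently, for $1\leq s\leq r$,
\begin{equation*}
|E\phi(Q^{(s)}_\bullet)\phi(Q^{(r)}_\bullet)|=|E\phi(Q^{(s)}_\bullet)[\phi(Q^{(r)}_\bullet)-\phi(\tilde Q^{(r)}_\bullet)]|\leq\|\phi\|_\infty\|\phi\|_{Lip}\,E\|Z-\tilde Z\|_\infty\leq C'(s/r)^{1/2}.
\end{equation*}

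Inserting this into Fubini yields $E(\int\phi\,d\zeta_t)^2\leq C''/\ln t$, and the same computation with a sum in place of an integral gives $E(\int\phi\,d\nu_n)^2\leq C''/\ln n$. Along the subsequence $t_j=\exp(j^2)$ (respectively $n_j=[\exp(j^2)]$) the variance is $O(j^{-2})$ and summable, so Chebyshev and Borel--Cantelli yield a.s.\ convergence to $0$ along the subsequence. For $t\in[t_j,t_{j+1}]$, direct bookkeeping using $\ln t_{j+1}/\ln t_j=1+O(1/j)$ and $\|\phi\|_\infty\leq 1$ gives $|\int\phi\,d\zeta_t-\int\phi\,d\zeta_{t_j}|=O(1/j)$, and the analogous estimate for $\nu_n$ extends the a.s.\ convergence to all $t$ and $n$. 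The main technical obstacle is the decorrelation coupling in the second paragraph: it is the identification of the enlarged Gaussian span $\cH_s$ together with the independent-increment structure of each $G_j$ that converts the pointwise covariance decay $|EQ^{(s)}_uQ^{(r)}_v|\leq C(s/r)^{1/2}$ into the sup-norm estimate required for general bounded Lipschitz $\phi$.
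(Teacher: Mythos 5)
Your proof is correct, but it follows the Lacey--Philipp second-moment route rather than the ergodic-theoretic route the paper actually takes (and the paper's Remark 4.3 explicitly notes that the method of \cite{LP} can be adapted to this setting, which is essentially what you have done). The paper, following Brosamler \cite{Bro}, first establishes a.s.\ tightness of $\{\zeta_t\}$ by carrying over the Brownian estimates of \cite{Bro} to $Q$ (a linear combination of linearly time-changed Brownian motions), and then proves convergence of finite-dimensional characteristic functionals $\phi(x)=\exp(i\sum_m\al_mx(u_m))$: the scale invariance $R(rs,rt)=rR(s,t)$ makes $\hat\Phi_s=\Phi_{\ln s}$ a stationary process, the tail $\sig$-algebra of $(G_1,\dots,G_\ell)$ is trivial, and the Birkhoff ergodic theorem gives $\frac 1{\ln t}\int_1^t\frac {ds}s\Phi_s\to E\Phi_1$ a.s. You instead test against a countable convergence-determining family of bounded Lipschitz functionals (which dispenses with the separate tightness step) and replace the ergodic theorem by the quantitative decorrelation bound $|E\phi(Q^{(s)}_\bullet)\phi(Q^{(r)}_\bullet)|\le C(s/r)^{1/2}$, obtained from the independent-increment structure of $G$ by conditioning on $\cH_s$ and the coupling $\tilde Q^{(r)}=\tilde Z+W$; this yields a variance of order $1/\ln t$ and a.s.\ convergence via Chebyshev and Borel--Cantelli along $t_j=\exp(j^2)$ plus interpolation. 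The coupling step is sound: $Z$ is $\sig(G(u),u\le\ell s)$-measurable while $W$ depends only on increments of $G$ after time $\ell s$, so $Z$ and $W$ are independent, $(\tilde Z,W)\stackrel{d}{=}(Z,W)$, $\tilde Q^{(r)}$ is independent of $Q^{(s)}_\bullet$, and Doob's inequality gives $E\|Z\|_\infty\le C(s/r)^{1/2}$ as you claim. Your argument is more quantitative and self-contained (it needs neither the ergodicity/tail-triviality input nor Brosamler's tightness estimates), at the cost of the explicit Gaussian coupling; the paper's argument is shorter because it outsources both ingredients to \cite{Bro}.
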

\begin{proof} In the same way as in \cite{Bro} we show first that with 
probability one the measures $\{\zeta_t,\, t\geq e\}$ are tight observing
that the estimates for the Brownian motion in \cite{Bro} go through for
our process $Q$, as well, since it is a linear combination of linearly
time changed Brownian motions.

Next, as in \cite{Bro} we define $\phi:\,\bbC[0,1]\to\bbR$ by
\begin{equation}\label{4.5}
\phi(x)=E^{i(\al_1x(u_1)+\cdots+\al_kx(u_k))}
\end{equation}
for some $k\in\bbN$, $0<u_1<\cdots<u_k\leq 1$ and $\al_1,...,\al_k\in\bbR$.
We want to show that 
\begin{equation}\label{4.6}
\lim_{t\to\infty}\int_{\bbC[0,1]}\phi(x)\zeta_t(\om)(dx)=\int_{\bbC[0,1]}
\phi(x)d\eta_Q(x).
\end{equation}
Set
\begin{equation}\label{4.7}
\Phi_s=\exp\big( i(\al_1\sqrt {u_1}R_{u_1s}+\cdots +\al_k\sqrt {u_k}R_{u_ks}
)\big)
\end{equation}
where $R_s=Q_1^{(s)}$. Then
\begin{equation}\label{4.8}
\int_{\bbC[0,1]}\phi(x)\zeta_t(\om)(dx)=\frac 1{\ln t}\int_1^t\frac {ds}s
\Phi_s
\end{equation}
since $\sqrt uR_{us}=\sqrt uQ_1^{(us)}=Q_u^{(s)}$. Next, observe that
the laws of $\{ R_s,\, s>0\}$ and $\{ R_{sh},\, s>0\}$ (as processes)
 coincide for each $h>0$ since both processes are Gaussian with zero
 mean and in view of Theorem \ref{thm2.2} the covariance function
 \begin{equation}\label{4.9}
 ER_{sh}R_{th}=h^{-1}EQ_h^{(s)}Q_h^{(t)}=h^{-1}\sum_{i,j=1}^\ell EG_i(his)
 G_j(hjs)=s\sum_{i,j=1}^\ell D_{ij}(i\wedge j)
 \end{equation}
 does not depend on $h$. It follows from here and the definition (\ref{4.7})
 that if we set $\hat\Phi_s=\Phi_{\ln s}$ then the laws of $\{\hat\Phi_s,\,
 s\in\bbR\}$ and $\{\hat\Phi_{s+h},\, s\in\bbR\}$, $h\in\bbR$ coincide. Then
 by the Birkhoff ergodic theorem together with the fact that the tail
 $\sig$-algebra $\cap_{t>0}\sig\{ G_1(u),G_2(u),...,G_\ell(u);\, u>t\}$
 is trivial (as for an $\ell$-dimensional Brownian motion) we conclude that
 with probability one 
 \begin{eqnarray}\label{4.10}
 &\lim_{n\to\infty}\frac 1n\int_1^{e^n}\frac {ds}s\Phi_s=\lim_{n\to\infty}
 \frac 1n\int_0^n\Phi_{e^u}du\\
 &=\lim_{n\to\infty}\frac 1n\int_0^n\hat\Phi_udu=E\hat\Phi_0=E\Phi_1.
 \nonumber\end{eqnarray}
 A simple comparison of integrals $\int_1^{e^t}$ and $\int_1^{e^{[t]}}$ as
 in \cite{Bro} shows also that
 \begin{equation}\label{4.11}
 \lim_{t\to\infty}\frac 1t\int_1^{e^t}\frac {ds}s\Phi_s=E\Phi_1.
 \end{equation}
 But by (\ref{4.5}) and (\ref{4.7}),
 \begin{equation}\label{4.12}
 E\Phi_1=E\exp(i(\al_1Q_{u_1}^{(1)}+\cdots +\al_kQ_{u_k}^{(1)}))=
 \int_{\bbC[0,1]}\phi(x)d\eta_Q(x)
 \end{equation}
 since $Q_u^{(1)}=Q(u)$. Hence, (\ref{4.6}) follows from (\ref{4.7}),
 (\ref{4.8}), (\ref{4.11}) and (\ref{4.12}). Since (\ref{4.6}) holds true
 for any $\phi$ defined by (\ref{4.5}) then relying on tightness of the
 family $\{\zeta_t,\, t\geq e\}$ we obtain the first limit in (\ref{4.4}).
 The second limit there holds true by the same arguments as in \cite{Bro}
 which are just estimates for the Brownian motion valid in our case of
 a linear combination of linearly time changed Brownian motions, as well.
 \end{proof}

In order to derive Theorem \ref{thm2.4} from Proposition \ref{prop3.3} we
 will rely on the following result which appears in \cite{Bro} as Lemma 2.12.
\begin{lemma}\label{lem4.2} Let $\Pi,\Up:\,\bbR^+\to\bbC[0,1]$ or
$\Pi,\Up:\,\bbN^+\to\bbC[0,1]$ be measurable $\bbC[0,1]$-valued stochastic
processes such that, respectively,
\begin{equation}\label{4.13}
\lim_{s\to\infty}\|\Pi_s-\Up_s\|_{\bbC[0,1]}=0\,\,\mbox{or}\,\,
\lim_{n\to\infty}\|\Pi_n-\Up_n\|_{\bbC[0,1]}=0
\end{equation}
where $\|\cdot\|_{\bbC[0,1]}$ is the supremum norm on $\bbC[0,1]$.
Then for all bounded, uniformly continuous functions $f:\,\bbC[0,1]\to\bbR$,
\begin{equation}\label{4.14}
\lim_{t\to\infty}\big(\frac 1{\ln t}\int_1^t\frac {ds}s(f(\Pi_s)-f(\Up_s))\big)
=0\,\,\mbox{or}\,\,
\lim_{n\to\infty}\big(\frac 1{\ln n}\sum_{k=1}^nk^{-1}(f(\Pi_k)-f(\Up_k))\big)
=0,
\end{equation}
respectively.
\end{lemma}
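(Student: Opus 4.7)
The proof of Lemma \ref{lem4.2} is a routine Ces\`aro/logarithmic-averaging argument exploiting uniform continuity of $f$, so the plan is to write it out carefully but without surprises. I will treat the continuous-parameter case explicitly; the discrete case is identical up to replacing $\int_1^t \frac{ds}{s}$ by $\sum_{k=1}^n k^{-1}$ and using the fact that $\sum_{k=1}^n k^{-1} = \ln n + O(1)$ so that division by $\ln n$ gives the correct asymptotic weights.

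First I would fix an arbitrary $\ve>0$ and, using uniform continuity of $f$, pick $\del>0$ such that $\|x-y\|_{\bbC[0,1]}<\del$ implies $|f(x)-f(y)|<\ve$. By the hypothesis (\ref{4.13}) there exists $s_0=s_0(\del)\geq 1$ such that $\|\Pi_s-\Up_s\|_{\bbC[0,1]}<\del$ for every $s\geq s_0$, and hence $|f(\Pi_s)-f(\Up_s)|<\ve$ on that range. Then I would split the logarithmic average at $s_0$:
\begin{equation*}
\Big|\frac 1{\ln t}\int_1^t\frac {ds}s\big(f(\Pi_s)-f(\Up_s)\big)\Big|
\leq \frac{1}{\ln t}\int_1^{s_0}\frac{ds}{s}\,2\|f\|_\infty
+\frac{1}{\ln t}\int_{s_0}^{t}\frac{ds}{s}\,\ve
\leq \frac{2\|f\|_\infty \ln s_0}{\ln t}+\ve.
\end{equation*}
Letting $t\to\infty$ with $s_0$ fixed makes the first term vanish, yielding $\limsup_{t\to\infty}|\cdot|\leq\ve$. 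Since $\ve>0$ was arbitrary, the first limit in (\ref{4.14}) follows.

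For the discrete case I would argue identically: choose $n_0$ so that $|f(\Pi_k)-f(\Up_k)|<\ve$ for all $k\geq n_0$, then split $\sum_{k=1}^n k^{-1}$ into $\sum_{k=1}^{n_0-1}+\sum_{k=n_0}^n$. The first sum is bounded by $2\|f\|_\infty(\ln n_0+O(1))$, which divided by $\ln n$ tends to $0$, while the second is bounded by $\ve \cdot \sum_{k=n_0}^n k^{-1}/\ln n\to\ve$. There is no real obstacle here; the only minor point worth noting is that the result only requires $f$ to be bounded and uniformly continuous on the ranges of $\Pi_s$ and $\Up_s$ eventually, which is automatic from the global hypotheses imposed on $f$. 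Consequently the lemma is available for any bounded uniformly continuous test function, which is exactly what is needed in applications such as (\ref{2.19})--(\ref{2.20}) where $\Pi_s$ will be taken to be $Q_s$ (or $Q^{(s)}$) and $\Up_s$ the corresponding normalized $\Xi$-process, with (\ref{4.13}) supplied by the strong approximation (\ref{2.12}).
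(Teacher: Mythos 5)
Your proof is correct and is the standard logarithmic-averaging argument; note that the paper itself gives no proof of this lemma, citing it directly as Lemma 2.12 of \cite{Bro}, whose proof is exactly the splitting-at-$s_0$ argument you wrote out. Nothing further is needed.
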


In order to prove Theorem \ref{thm2.4} it suffices in view of Proposition
\ref{prop4.1} and Lemma \ref{lem4.2} to show that
\begin{equation}\label{4.15}
 \lim_{n\to\infty}\| Q^{(n)}-Q_n\|_{\bbC[0,1]}=0\quad\mbox{a.s.}
 \end{equation}
 where $Q^{(n)}$ and $Q_n$ are defined by (\ref{4.1}) and (\ref{2.15}),
 respectively.
 For each $t\in[0,1],\, n\in\bbN$ define $\Psi^{(n)}(t)=(\Psi^{(n)}_1,...,
 \Psi^{(n)}_\ell)$ and $G^{(n)}(t)=(G^{(n)}_1,...,G^{(n)}_\ell)$ where
 for $j=1,...,\ell$,
 \begin{equation}\label{4.16}
 \Psi^{(n)}_j(t)=n^{-1/2}\Psi_j(j[nt])(1+[nt]-nt)+n^{-1/2}\Psi_j(j[nt]+j)
 (nt-[nt]),
 \end{equation}
 \begin{equation}\label{4.17}
 G_j^{(n)}(t)=n^{-1/2}G_j(j[nt])(1+[nt]-nt)+n^{-1/2}G_j(j[nt]+j)
 (nt-[nt])
 \end{equation}
 and $H_j^{(s)}(t)=s^{-1/2}G_j(tjs)$ where $\Psi_j$ and $G_j$ are the same
 as in (\ref{2.11}) and Theorem \ref{thm2.2}, respectively. Then in order to
 obtain (\ref{4.15}) it suffices to show that for each $j=1,...,\ell$,
 \begin{equation}\label{4.18}
 \lim_{n\to\infty}\| H^{(n)}_j-G^{(n)}_j\|_{\bbC[0,1]}=0\quad\mbox{a.s.}\quad
 \mbox{and}
 \end{equation}
 \begin{equation}\label{4.19}
 \lim_{n\to\infty}\| G^{(n)}_j-\Psi^{(n)}_j\|_{\bbC[0,1]}=0\quad\mbox{a.s.}
 \end{equation}
 
 Now, in the same way as in \cite{Bro} for each $\ve>0$ by the Doob martingale
 inequality,
 \begin{eqnarray}\label{4.20}
 &P\{\|H^{(n)}_j-G^{(n)}_j\|_{\bbC[0,1]}\geq \frac 12\ve\sqrt n\}\\
 &\leq n\sum_{k=0}^{n-1}P\{ \sup_{k\leq t\leq k+1}|G_j(jt)-G_j(jk)|\geq
 \frac 12\ve\sqrt n\}\leq C\ve^{-6}n^{-2}.\nonumber
 \end{eqnarray}
 This together with the Borel--Cantelli lemma yields (\ref{4.19}). Finally,
 \begin{equation}\label{4.21}
 \| G^{(n)}_j-\Psi^{(n)}_j\|_{\bbC[0,1]}\leq n^{-1/2}\max_{1\leq k\leq n+1}
 |G_j(jk)-\Psi_j(jk)|
 \end{equation}
 and the right hand side of (\ref{4.21}) converges with probability one
 to 0 as $n\to\infty$ in view of Theorem \ref{thm2.2}, completing the
 proof of Theorem \ref{thm2.4}.  \qed
 
 \begin{remark}\label{rem4.3} A slightly different method of proof of the
 almost sure central limit theorem from \cite{LP} can also be adapted to
 our nonconventional setup.
 \end{remark}
\bibliography{matz_nonarticles,matz_articles}
\bibliographystyle{alpha}

\end{document}